\documentclass[10pt,twoside]{siamltex}

\usepackage{amssymb}
\usepackage{amsmath,xcolor}
\usepackage{mathrsfs}
\usepackage{graphicx}
\usepackage{calc}%

\usepackage[mathscr]{euscript}
\newcommand{\Dt}{\Delta t}

\newcommand{\m}{\mathbf m }

\newcommand{\uu}{\mathbf u}
\newcommand{\vv}{\mathbf v}

\newcommand{\diver}{{\rm {div} \,\, }}
\newcommand{\varep}{\varepsilon}

\newcommand{\norm} [1]{\left\| {#1}\right\|}

\newcommand{\R} {\mathbb R}
\newcommand{\N} {\mathbb N}

\newcommand{\nablad} {\nabla\cdot}

\newcommand{\intb}[1]{\left\langle #1 \right\rangle}

\newcommand{\intd}[1]{\left( #1 \right)}

\newcommand{\eqdef}{\overset{\mathrm{def}}{=\joinrel=}}

\def\beq{\begin{equation}}
\def\eeq{\end{equation}} 

\def\beqs{\begin{equation*}}
\def\eeqs{\end{equation*}}

\def\bals{\begin{align*}}
\def\eals{\end{align*}}

\def\bspl{\begin{split}}
\def\espl{\end{split}}

\def\myclearpage{}
\textwidth=15.5cm
\textheight=22cm

\title{Existence of a solution for generalized Forchheimer flow in porous media}
\author{Thinh Kieu \footnotemark[2]  }

\begin{document}

\maketitle
 
\renewcommand{\thefootnote}{\fnsymbol{footnote}}
\footnotetext[2]{Department of Mathematics, University of North Georgia, Gainesville Campus, 3820 Mundy Mill Rd., Oakwood, GA 30566, U.S.A. ({\tt thinh.kieu@ung.edu}).}               
               
\begin{abstract}  This paper is focused on the generalized Forchheimer flows for slightly compressible fluids. We prove the existence and uniqueness of the differential system for stationary problem. The technique of semi-discretization in time is used to prove the existence of solution for the transient problem. 
 \end{abstract}
            
            
            
\begin{keywords}
Porous media, immersible flow, generalized Forchheimer equations, existence.
\end{keywords}

\begin{AMS}
35Q35,  35D30, 35K55, 76S05.
\end{AMS}

\pagestyle{myheadings}
\thispagestyle{plain}
\markboth{Thinh Kieu}{Galerkin method for Forchheimer-Ward equation for slightly compressible fluids}
            
            
\myclearpage    
\section {Introduction} \label{Intrsec}
 We consider a fluid in porous medium  occupying a bounded domain $\Omega\subset \R^d,$ $d\ge 2$ with boundary $\Gamma$. Let $x\in\R^d $, $0<T<\infty$ and $t\in (0,T]$ be the spatial and time variables respectively. The fluid flow has velocity $v(x,t)\in \R^d$, pressure $p(x,t)\in\R$ and density $\rho(x,t)\in \R_+$.   

The  Darcy--Forchheimer equation is studied in \cite{ABHI1,HI1,HI2} of the form 
\beq\label{gF}
-\nabla p =\sum_{i=0}^N a_i |v|^{\alpha_i}v. 
\eeq  
These equations are analyzed numerically in \cite{Doug1993,EJP05,K1},
theoretically in \cite{ABHI1,HI2,HIKS1,HKP1,HK1,HK2} for single phase flows, and also in \cite{HIK1,HIK2} for two phase flows.

In order to take into account the presence of density in generalized Forchheimer equation, we modify \eqref{gF} using dimension analysis by Muskat \cite{Muskatbook} and Ward \cite{Ward64}. They proposed the following equation for both laminar and turbulent flows in porous media:
\beq\label{W}
-\nabla p =G(v^\alpha \kappa^{\frac {\alpha-3} 2} \rho^{\alpha-1} \mu^{2-\alpha}),\text{ where  $G$ is a function of one variable.}
\eeq 
In particular, when $\alpha=1,2$, Ward \cite{Ward64} established from experimental data that
\beq\label{FW} 
-\nabla p=\frac{\mu}{\kappa} v+c_F\frac{\rho}{\sqrt \kappa}|v|v,\quad \text{where }c_F>0.
\eeq

Combining  \eqref{gF} with the suggestive form \eqref{W} for the dependence on $\rho$ and $v$, we propose the following equation 
 \beq\label{FM}
-\nabla p= \sum_{i=0}^N a_i \rho^{\alpha_i} |v|^{\alpha_i} v,
 \eeq
where $N\ge 1,\alpha_0=0<\alpha_1<\ldots<\alpha_N$ are fixed real numbers, the coefficients $a_0(x,t), \ldots, a_N(x,t)$ are non-negative with 
$$ 0<\underbar a <a_0(x,t), a_N(x,t)<\bar a<\infty, \quad 0\le a_i(x,t) \le \bar a<\infty,\,  i=1,\ldots, N-1.$$ 
  

Multiplying both sides of previous equation to $\rho$, we obtain 
 \beq\label{eq1}
  \left(\sum_{i=0}^N a_i |\rho v|^{\alpha_i}\right) \rho v   =-\rho\nabla p,
 \eeq
 Denote the function $F:\Omega\times[0,T]\times\mathbb{R}^+\rightarrow\mathbb{R}^+$ a generalized polynomial with non-negative coefficients by
\beq\label{eq2}
F(x,t, z)=a_0(x,t)z^{\alpha_0} + a_1(x,t)z^{\alpha_1}+\cdots +a_N(x,t)z^{\alpha_N},\quad z\ge 0. 
\eeq 
The equation \eqref{eq1} can rewrite as 
\beq\label{eq1a} 
F(x,t, |\rho v|)\rho v = -\rho\nabla p.
\eeq 

Under isothermal condition the state equation relates the density $\rho$ with the pressure $p$ only, i.e., $\rho=\rho(p)$. Therefore, the equation of state which, for slightly compressible fluids, is
\beq
\frac 1\rho \frac{d\rho}{dp}=\frac 1\kappa=const.>0.
\eeq
Hence
\beq \label{eq3}
\nabla \rho = \frac{1}{\kappa} \rho\nabla p, 
\quad \text{ or }\quad  \rho \nabla p=\kappa\nabla \rho.
\eeq
Combining \eqref{eq1a} and \eqref{eq3} implies that  
 \beq\label{ru}
 F(x,t, |\rho v|) \rho v   =-\kappa\nabla \rho.
 \eeq

The continuity equation is
\beq\label{con-law}
\phi(x)\rho_t+{\rm div }(\rho v)=f(x,t).
\eeq
where $\phi$ is the porosity, $f$ is external mass flow rate . 

By combining \eqref{ru} and \eqref{con-law} we have
\beqs
\begin{aligned}
F(x,t,|\m|) \m = -\kappa\nabla \rho, \\
\phi(x)\rho_t+ \diver{\m}=f(x,t),
\end{aligned}
\eeqs
where $\m=\rho v$.
By rescal the variable $\rho\to\kappa\rho$, $\phi(x)\to \kappa^{-1}\phi(x) $. We can assume $\kappa=1$ to obtain system of equations  
\beq\label{main-sys}
\begin{aligned}
F(x,t,|\m|) \m = -\nabla \rho, \\
\phi(x)\rho_t+ \diver{\m}=f(x,t).
\end{aligned}
\eeq  
The Darcy- Forchheimer equation in \eqref{main-sys} can be resolve to give 
\beq\label{rua} 
\m = - K(x,t, |\nabla \rho |)\nabla \rho,
\eeq
where the function $K: \Omega\times[0,T]\times\mathbb{R}^+\rightarrow\mathbb{R}^+$ is defined for $\xi\ge 0$ by
\beq\label{Kdef}
K(x,t, \xi)=\frac{1}{F(x,t,s(x,t, \xi))},
\eeq
  with   $s=s(x,t, \xi)$  being the unique non-negative solution of  $sF(s)=\xi$.

Substituting \eqref{rua} into the second equation of \eqref{main-sys} we obtain a scalar partial differential equation (PDE) for
the density:
\beq\label{mainEq}
\phi(x)\rho_t- \diver{ (K(x,t, |\nabla \rho |)\nabla \rho) }=f(x,t), \quad (x,t)\in\Omega\times[0,T]. 
\eeq
This equations with Dirichlet boundary conditions and appropriate initial conditions was studied theoretically in \cite{HIKS1} for constants physical parameters. Using the theory of monotone operators \cite{MR0259693,s97,z90}, they proved the global existence of weak solutions.

\subsection{Notations and preliminary results}

Suppose that $\Omega$ is an open, bounded subset of $\mathbb{R}^d$, with $d=2,3,\ldots$, and has $C^1$-boundary $\Gamma=\partial \Omega$. Let $L^2(\Omega)$ be the set of square integrable functions on $\Omega$ and $( L^2(\Omega))^d$ the space of $d$-dimensional vectors which have all components in $L^2(\Omega)$.  We denote $(\cdot, \cdot)$ the inner product in either $L^2(\Omega)$ or $(L^2(\Omega))^d$ that is
$
( \xi,\eta )=\int_\Omega \xi\eta dx$  or $(\boldsymbol{\xi},\boldsymbol \eta )=\int_\Omega \boldsymbol{\xi}\cdot \boldsymbol{\eta} dx. 
$
The notation $\intb{\cdot ,\cdot}$ will be used for the $L^2(\partial \Omega)$ inner-product and $ \norm{u}_{L^p}=\norm{u}_{L^p(\Omega)}$ for standard  Lebesgue norm of the measurable function.    
 The notation $\norm {\cdot}$ will means scalar norm $\norm{\cdot}_{L^2(\Omega)}$ or vector norm $\norm{\cdot}_{(L^2(\Omega))^d}$. 



Throughout this paper, we use short hand notations, 
\beqs
\norm{\rho(t)} = \norm{ \rho(\cdot, t)}_{L^2(\Omega)}, \forall t\in (0,T) \quad \text{
 and } \quad \rho^0(\cdot) =  \rho(\cdot,0).
 \eeqs
 
  Our calculations frequently use the following exponents
\beq\label{a-const }
   s= \alpha_N+2,\quad  \alpha=\frac{\alpha_N}{\alpha_N+1}= \frac{\deg (F)}{\deg (F)+1}, \quad s^*=2-\alpha=\frac{s}{s-1}.
  \eeq

The arguments $C, C_1,\ldots$ will represent for positive generic constants and their values  depend on exponents, coefficients of polynomial  $F$,  the spatial dimension $d$ and domain $\Omega$, independent of the initial and boundary data and time step. These constants may be different place by place.


We introduce the generalization $W(\rm{div}; \Omega)$ of $H(\rm{div}; \Omega)$, defined by
 \beqs
 W(\rm{div}; \Omega) =\left\{ \vv\in (L^s(\Omega))^d , \nabla\cdot\vv \in L^2(\Omega)\right\}. 
 \eeqs 
 and equip it with the norm
 \beq \label{Greenthm}
 \norm{\vv}_{W(\rm{div}; \Omega)} = \norm{\vv}_{L^s} + \norm{\nabla\cdot \vv}.
  \eeq
Since $W(\rm{div};\Omega)$ is a closed subspace of $(L^s(\Omega))^{d+1}$, it follows that $W(\rm{div};\Omega)$ is a reflexive Banach space; the boundary $\vv\cdot\nu|_{\partial\Omega}$ exist and belong to $W^{-1/s,s^*}(\partial\Omega)$ and we have the Green's formula 
\beq
\int_\Omega \vv\nabla\psi dx+\int_\Omega \psi \nabla\cdot \vv dx= \int_{\partial\Omega} \psi \vv\cdot\nu d\sigma
\eeq  
hold for every $\vv\in W(\rm{div}; \Omega)$ and $\psi\in (W(\rm{div}; \Omega))'$, where $1/s+1/s^* =1$ (see in \cite{FM77}, Lemma 3)
\begin{lemma} The following inequality hold for all $y',y\in \R^d$. 

(i)
\beq\label{F-cont}
\left|F(x,t,|y'|)y'- F(x,t,|y|)y \right| \le C_1 \left(1+ |y'|^{\alpha_N}+ |y|^{\alpha_N} \right)|y'-y|.
\eeq

(ii)
\beq\label{F-mono}
\left(F(x,t,|y'|)y'- F(x,t,|y|)y \right)\cdot (y'-y) \ge  C_2 \left( |y'-y|^2 + |y'-y|^s\right),
\eeq
where the constants $C_1(N,\bar a, \rm {deg} (F))>0$, and $ C_2(N,\underline{a}, \rm {deg} (F) )>0.$ 
\end{lemma}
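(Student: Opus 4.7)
The plan is to handle both inequalities termwise, exploiting the linearity in the coefficients:
$$F(x,t,|y'|)y' - F(x,t,|y|)y = \sum_{i=0}^N a_i(x,t)\bigl(|y'|^{\alpha_i}y' - |y|^{\alpha_i}y\bigr).$$
Both estimates reduce to pointwise properties of the vector-valued map $g_\alpha(y) = |y|^\alpha y$ for $\alpha \in [0,\alpha_N]$, and the assumed bounds $a_0, a_N \ge \underline a$ together with $a_i \le \bar a$ convert these pointwise bounds into (i) and (ii).

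For (i), one computes the Jacobian $Dg_\alpha(y) = |y|^\alpha I + \alpha |y|^{\alpha-2} y\otimes y$ (extended continuously to zero when $\alpha > 0$), whose operator norm is at most $(\alpha+1)|y|^\alpha$. Integrating along the segment joining $y$ to $y'$ yields
$$\bigl|g_{\alpha_i}(y') - g_{\alpha_i}(y)\bigr| \le (\alpha_i+1)\bigl(|y|^{\alpha_i} + |y'|^{\alpha_i}\bigr)|y'-y|.$$
Interpolating $|z|^{\alpha_i} \le 1 + |z|^{\alpha_N}$ for $0 \le \alpha_i \le \alpha_N$, using $a_i \le \bar a$, and summing over $i=0,\ldots,N$ produces the constant $C_1(N,\bar a,\deg(F))$ claimed in (i).

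For (ii), the key ingredient is the classical monotonicity inequality
$$\bigl(|y'|^\alpha y' - |y|^\alpha y\bigr) \cdot (y'-y) \ge c_\alpha |y'-y|^{\alpha+2}, \qquad \alpha \ge 0,$$
which I would derive from the lower bound $v^\top Dg_\alpha(\eta) v \ge |\eta|^\alpha |v|^2$ by writing the left-hand side as $\int_0^1 (y'-y)^\top Dg_\alpha\bigl((1-t)y+ty'\bigr)(y'-y)\,dt$ and invoking the elementary estimate $\int_0^1 |(1-t)y+ty'|^\alpha dt \ge c_\alpha |y'-y|^\alpha$. Applied at $i=0$ this contributes at least $\underline a |y'-y|^2$, and at $i=N$ (where $\alpha_N+2 = s$) it contributes at least $\underline a\, c_{\alpha_N} |y'-y|^s$; the intermediate terms $1 \le i \le N-1$ give nonnegative contributions because each $Dg_{\alpha_i}$ is positive semidefinite. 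The main obstacle is the elementary integral lower bound just invoked, which requires a short case split depending on whether $|y|+|y'|$ is comparable to $|y'-y|$ or much larger; once that is settled, both (i) and (ii) follow by direct summation and the assumed bounds on $a_0,\ldots,a_N$.
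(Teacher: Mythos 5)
Your proposal is correct and follows essentially the same route as the paper: both arguments integrate the derivative of $y\mapsto |y|^{\alpha}y$ along the segment from $y$ to $y'$, bound the resulting Jacobian above by $(\alpha+1)|\cdot|^{\alpha}$ for (i) and below by $|\cdot|^{\alpha}|v|^2$ for (ii), and invoke the same elementary estimate $\int_0^1 |(1-\tau)y+\tau y'|^{\alpha_N}\,d\tau \ge c\,|y'-y|^{\alpha_N}$ (which the paper simply cites). Your termwise decomposition into the monomials $a_i(|y'|^{\alpha_i}y'-|y|^{\alpha_i}y)$, rather than differentiating $F(\cdot,|\gamma|)\gamma$ as a whole and using $\alpha_1 F \le F_z\,|\gamma| \le \alpha_N F$, is only an organizational variant and yields the same constants up to inessential factors.
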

\begin{proof}

(i)  Let $\gamma(t)=\tau y'+ (1-\tau)y, \tau\in [0,1]$ and $h(t) =F(x,t,|\gamma(\tau)|)\gamma(\tau)$.   Then 
\begin{align*}
&\left|F(x,t,|y'|)y'- F(x,t,|y|)y\right|= |h(1)-h(0)| =\left|\int_0^1 h'(\tau) d\tau\right|\\
 &\qquad=\left|\int_0^1 F(x,t,|\gamma(\tau)|)(y'-y) + F_z(x,t,|\gamma(\tau)|)\frac{\gamma(\tau)(y'-y)}{|\gamma(\tau)|}\gamma(\tau)   d\tau \right|\\
&\qquad\le |y'-y|\int_0^1 F(x,t,|\gamma(t)|) + F_z(x,t, |\gamma(\tau)|)|\gamma(\tau)|  d\tau.
\end{align*}

Note that $ F_z(x,t, |\gamma(\tau)|)|\gamma(\tau)| =  \sum_{i=0}^N a_i\alpha_i |\gamma(\tau)|^{\alpha_i} \le \alpha_N F(x,t, |\gamma(\tau)|) $
thus 
\beqs
|F(x,t,|y'|)y'- F(x,t,|y|)y| \le(1+\alpha_N) |y'-y|\int_0^1 F(x,t,|\gamma(\tau)|)  d\tau. 
\eeqs
 Using the inequality $x^\beta \le 1 +x^\gamma$ for $x\ge 0,  0<\beta<\gamma$ we find that  
 \beqs
 F(x,t,s) \le \max_{i=0,\ldots, N}a_i(x,t)\sum_{i=0}^N 1+s^{\alpha_N}\le (N+1) \max_{i=0,\ldots, N} a_i(x,t)(1+s^{\alpha_N}). 
 \eeqs
 Thus
\begin{align*}
|F(x,t, |y'|)y'- F(x,t,|y|)y| &\le(1+\alpha_N)(N+1) \max_{i=0,\ldots, N} a_i |y'-y|\Big(1+ \int_0^1 |\gamma(\tau)|^{\alpha_N}  d\tau\Big)\\
&\le (1+\alpha_N)(N+1) \max_{i=0,\ldots, N}a_i |y'-y|\Big(1+ \int_0^1 (|y'|+ |y|)^{\alpha_N}  d\tau\Big) \\
&\le 2^{\alpha_N}(1+\alpha_N)(N+1) \max_{i=0,\ldots, N} a_i \Big(1+ |y'|^{\alpha_N}+ |y|^{\alpha_N} \Big)|y'-y|,
\end{align*}
which proves \eqref{F-cont} hold. 
 
(ii)  Let $k(\tau) =F(x,t,|\gamma(\tau)|)\gamma(\tau)(y'-y)$.  Then 
\begin{align*}
&(F(x,t,|y'|)y'- F(x,t,|y|)y)\cdot(y'-y)= k(1)-k(0) =\int_0^1 k'(\tau) d\tau\\
&\qquad =\int_0^1\Big (F(x,t, |\gamma(\tau)|)|y'-y|^2 + F_z(x,t, |\gamma(\tau)|)\frac{|\gamma(\tau)(y'-y)|^2}{|\gamma(\tau)|}\Big) d\tau\\
&\qquad=|y'-y|^2\int_0^1 \Big(F(x,t,  |\gamma(\tau)|) + F_z(x,t,|\gamma(\tau)|)|\gamma(\tau)|\Big) d\tau \\
&\qquad\ge (1+\alpha_1)|y'-y|^2\int_0^1 F(x,t,|\gamma(\tau)|) d\tau\\
&\qquad\ge (1+\alpha_1)|y'-y|^2\Big(a_0+a_N\int_0^1 |\gamma(t)|^{\alpha_N} dt\Big).
\end{align*}
The two last inequality are obtained by using the inequalities 
\beqs
 F_z(x,t, |\gamma(\tau)|)|\gamma(\tau)| \ge \alpha_1 F(x,t, |\gamma(\tau)|)\quad \text { and } \quad F(x,t,|\gamma(\tau)|)\ge a_0+a_N|\gamma(\tau)|^{\alpha_N}.
\eeqs
It is proved (see e.g in \cite{CHIK1} Lemma 2.4, or \cite {MR2566733}  p.13, 14) that 
\beqs
\int_0^1 |\gamma(t)|^{\alpha_N} dt \ge \frac{|y'-y|^{\alpha_N}}{2^{\alpha_N+1}(\alpha_N+1) }. 
\eeqs
Hence
\beqs
(F(x,t,|y'|)y'- F(x,t,|y|)y)\cdot(y'-y)\ge (1+\alpha_1)|y'-y|^2\left(a_0+a_N\frac{|y'-y|^{\alpha_N}}{2^{\alpha_N+1}(\alpha_N+1) }\right).
\eeqs
\end{proof}

  The remainder of the article is organized as follows.  
  In section \S \ref{Intrsec}, we introduce the notations and the relevant results.
 In Section \ref{StatProb}  we consider the stationary problem of \eqref{main-sys}. The existence and uniqueness of a solution  is proved in Theorem~\ref{stationaryProb}. 
In Section \ref{SemiProb} we investigate the semi-discrete problem after discretization of the time-derivative in \eqref{main-sys} and show again the existence and uniqueness of a solution in Theorem~\ref{Sol-semidiscreteProb} . 
Finally, in Section \ref{TransProb}, we study the transient problem governed by~\eqref{TProb} with homogeneous boundary conditions. We derive a priori estimates of the solutions to \eqref{semidiscrete-prob}. These are used to prove the solvability of the transient problem~\eqref{TProb}.

\section {The stationary problem}\label{StatProb}
We consider the stationary problem governed by the Darcy-Forchheimer equation and the stationary continuity equation together with Dirichlet boundary condition   
\beq\label{stationaryProb}
\begin{aligned}
F(x,|\m|) \m = -\nabla \rho,\qquad x\in \Omega,\\
\diver{\m}=f(x), \qquad x\in \Omega,\\
\rho =-\rho_b(x),\qquad  x\in\partial \Omega.
\end{aligned}
\eeq
\subsection{The mixed formulation of the stationary problem}
The mixed formulation of \eqref{stationaryProb} read as follows: Find $(\m,\rho)\in W(\rm {div}; \Omega)\times L^2(\Omega)$ such that  
\beq\label{WeakStationaryProb}
\begin{aligned}
(F(x,|\m|) \m, \vv) -(  \rho, \nabla\cdot \vv)=- \langle  \rho_b, \vv\cdot \nu  \rangle, \quad \forall \vv\in W(\rm {div}; \Omega),\\
(\nabla\cdot \m, q)=(f,q), \quad\forall q\in L^2(\Omega).
\end{aligned}
\eeq

We introduce 
 a bilinear form $b:W(\rm {div}; \Omega)\times L^2(\Omega)\to \R$ and a nonlinear form $a: (L^{s}(\Omega))^d\times (L^{s}(\Omega) )^d\to \R$ by mean of 
\beqs
b(\vv,q)=  (\nabla \cdot \vv, q) \, \text{ for }  \vv\in W(\rm {div}; \Omega), q\in L^2(\Omega),
\eeqs
and
 \beqs
 a(\uu,\vv)= (F(x,t,|\uu|)\uu, \vv ) \, \text { for } \uu,\vv\in (L^{s}(\Omega))^d. 
\eeqs
Then we rewrite the mixed formulation \eqref{WeakStationaryProb} as follows:
Find $(\m,\rho)\in W(\rm div, \Omega)\times L^2(\Omega)\equiv V\times Q$ such that  
\beq\label{equivform}
\begin{aligned}
a(\m, \vv) -b(\vv,\rho) =- \intb{\rho_b, \vv\cdot \nu }, \quad \forall \vv\in W(\rm{div}; \Omega)\\
b(\m, q)=(f,q), \quad \forall q\in L^2(\Omega).
\end{aligned}
\eeq 
\subsection{Existing results} This subsection is devoted to establish  the existence and the uniqueness of weak solution of the stationary problem~\eqref{stationaryProb}.   
 \begin{theorem}\label{SolofStationaryProb}
Suppose
$f\in L^2(\Omega),$ and $\rho_b\in W^{1/s, s}  (\partial \Omega)$. The mixed formulation \eqref{WeakStationaryProb} of the stationary problem \eqref{stationaryProb} has unique solution $(\m,\rho)\in W(\rm{div};\Omega)\times L^2(\Omega)$.  
\end{theorem}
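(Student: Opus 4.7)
The theorem is a nonlinear mixed (saddle-point) variational problem, so my plan is to combine monotone-operator theory for the nonlinear form $a$ with an inf-sup condition for the linear form $b$. Concretely I would: (i) verify that $a$ induces a bounded, hemicontinuous, strictly monotone, coercive operator $A$ on $(L^s(\Omega))^d$; (ii) verify the inf-sup condition for $b$ on $V\times Q$ and the continuity of the load functional $\ell(\vv)=-\intb{\rho_b,\vv\cdot\nu}$ on $V$; (iii) lift the divergence constraint to reduce the system to a single nonlinear equation on the kernel $V_0=\{\vv\in V:\nabla\cdot\vv=0\}$; (iv) apply Browder--Minty on $V_0$ to obtain $\m$, and then recover $\rho$ via the inf-sup condition.

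The properties of $a$ come directly from the preceding lemma. Estimate~\eqref{F-cont} combined with H\"older's inequality, using the identity $(\alpha_N+1)s^*=s$, gives $\norm{F(x,|\uu|)\uu}_{L^{s^*}}\le C(1+\norm{\uu}_{L^s}^{s-1})$, so the induced operator $A:(L^s)^d\to((L^s)^d)'$ is bounded and $a(\uu,\cdot)$ is continuous; hemicontinuity is then immediate from the pointwise smoothness of $z\mapsto F(x,z)z$. Estimate~\eqref{F-mono} with $y=0$ yields $a(\uu,\uu)\ge C(\norm{\uu}^2+\norm{\uu}_{L^s}^s)$, which is super-linear in $\norm{\uu}_{L^s}$ and provides coercivity; the full form of~\eqref{F-mono} gives strict monotonicity. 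The functional $\ell$ is bounded on $V$ because, as noted after~\eqref{Greenthm}, $\vv\cdot\nu|_{\partial\Omega}$ lies in $W^{-1/s,s^*}(\partial\Omega)$, the topological dual of the space $W^{1/s,s}(\partial\Omega)$ to which $\rho_b$ belongs.

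For the inf-sup condition I would solve an auxiliary Dirichlet problem $-\Delta\phi=q$ with $\phi|_{\partial\Omega}=0$ and set $\vv=-\nabla\phi$; elliptic regularity combined with Sobolev embedding then yields $\vv\in V$ with $\norm{\vv}_V\le C\norm{q}$ and $b(\vv,q)=\norm{q}^2$, which is the inf-sup inequality. Given this, I pick $\m_f\in V$ with $\nabla\cdot\m_f=f$ (again by the surjectivity coming from inf-sup), write $\m=\m_f+\widetilde\m$ with $\widetilde\m\in V_0$, and reduce the system to: find $\widetilde\m\in V_0$ with $a(\m_f+\widetilde\m,\vv)=\ell(\vv)$ for every $\vv\in V_0$. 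The shifted operator $\widetilde A:V_0\to V_0'$, $\langle\widetilde A\widetilde\m,\vv\rangle=a(\m_f+\widetilde\m,\vv)$, inherits monotonicity and hemicontinuity from $A$; coercivity follows from the elementary estimate $\norm{\m_f+\widetilde\m}_{L^s}^s\ge c\norm{\widetilde\m}_{L^s}^s-C$ together with the fact that on $V_0$ the graph norm reduces to $\norm{\cdot}_{L^s}$. Browder--Minty then furnishes a solution $\widetilde\m$, made unique by strict monotonicity. Finally the inf-sup condition applied to the bounded linear functional $\vv\mapsto a(\m,\vv)-\ell(\vv)$ on $V$, which vanishes on $V_0$, produces a unique $\rho\in Q$ satisfying $b(\vv,\rho)=a(\m,\vv)-\ell(\vv)$ for all $\vv\in V$.

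The main technical obstacle is the inf-sup step: the Dirichlet lift yields $\nabla\phi\in H^1(\Omega)$, whereas the embedding $H^1\hookrightarrow L^s$ holds unconditionally only for $s\le 2d/(d-2)$, and $s=\alpha_N+2$ can in principle be arbitrarily large. To cover the full range of $\alpha_N$ one must either invoke $W^{2,r}$-regularity of the Poisson problem with $r$ adapted to $s$, or use a Bogovskii-type right inverse of the divergence on $W_0^{1,r}(\Omega)$; once the inf-sup is secured, the remaining steps are standard nonlinear saddle-point arguments.
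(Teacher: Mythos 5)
Your proposal is correct in outline, but it takes a genuinely different route from the paper. You use the classical kernel decomposition for saddle-point problems: lift the constraint by a particular solution $\m_f$ of $\nabla\cdot\m_f=f$, solve a monotone equation on the divergence-free subspace $V_0$ by Browder--Minty, and then recover $\rho$ from the inf-sup condition via the closed-range/annihilator argument. The paper instead regularizes the \emph{coupled} system, adding $\varep(\nabla\cdot\m_\varep,\nabla\cdot\vv)$ and $\varep(\rho_\varep,q)$ to make the full operator $\mathcal A_\varep$ on $V\times Q$ coercive and strictly monotone, applies Browder--Minty to the regularized problem, derives bounds on $(\m_\varep,\rho_\varep)$ uniform in $\varep$ (this is where the inf-sup lemma \eqref{supinfcdn} enters), and passes to the limit $\varep=1/n\to 0$ using strong convergence of $\mathcal A(\m_n,\rho_n)$ to $\tilde f$ in $(V\times Q)'$. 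The trade-off is instructive: the regularization makes the \emph{existence} step for the approximate problems entirely independent of any inf-sup condition (the $\varep\norm{\rho_\varep}^2$ term supplies coercivity in $\rho$), deferring the inf-sup to the a priori estimate, whereas your approach leans on the inf-sup twice --- once for the surjectivity of the divergence needed to build $\m_f$, and once for the pressure recovery --- but avoids the limit passage and the weak-convergence bookkeeping. Your uniqueness argument (strict monotonicity of $a$ for $\m$, injectivity of $B'$ for $\rho$) coincides with the paper's Step 4. The technical obstacle you flag at the end is real and is not an artifact of your route: the paper's quoted inf-sup lemma controls only $\norm{q}_{L^{s^*}}$ (note $s^*\le 2$), and the passage from that to a bound on $\norm{\rho}_{L^2}$ for large $s=\alpha_N+2$ is exactly the point where either $W^{2,r}$-regularity of the Poisson problem or a Bogovskii-type right inverse of the divergence must be invoked; the paper relies on the cited lemma of Knabner--Summ/Sandri for this, so your honest identification of the issue is an accurate reading of where the real analytic content lies.
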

\begin{proof}
 We use regularization to show the existence of a weak solution $(\m, \rho)\in V\times Q$ to problem~\eqref{WeakStationaryProb}. The proof includes many steps.  
In step 1, we introduce an approximate problem. 
In step 2 we show that the approximate solution $(\m_\varep, \rho_\varep)$ is bounded independence of $\varep$.  
In step 3. We prove the limit $(\m, \rho)$ of the approximate solution $(\m_\varep, \rho_\varep)$ satisfy problem \eqref{WeakStationaryProb}. 
Step 4 is devoted to prove the uniqueness of weak solution $(\m, \rho)$ of problem \eqref{WeakStationaryProb}. 

{\bf Step 1. } For the fixed $\varep>0$, we consider the following regularized problem: Find $(\m_\varep,\rho_\varep)\in V \times Q$ such that  
\beq\label{reg-prob}
\begin{aligned}
a(\m_\varep, \vv)+ \varep \intd{\nabla\cdot \m_\varep , \nabla\cdot \vv}  - b(\vv,\rho_\varep) =- \intb{\rho_b,\quad \vv\cdot \nu }, \quad  \forall \vv\in V\\
\varep(\rho_\varep, q)+ b(\m_\varep, q)=(f,q), \quad  \forall q\in Q.
\end{aligned}
\eeq 
\begin{lemma}\label{StatSol}
For every $\varep>0$ there is unique solution $(\m_\varep, \rho_\varep)\in V\times Q$ of the regularization  problem \eqref{reg-prob}.  
\end{lemma}
\begin{proof}
Adding the left hand side of \eqref{reg-prob}, we obtain the nonlinear form defined on $V\times Q$, 
\beq\label{a-eps}
a_\varep((\m_\varep,\rho_\varep), (\vv,q) ) := a(\m_\varep,\vv) + \varep(\nabla\cdot \m_\varep,\nabla\cdot \vv) - b(\vv, \rho_\varep) +\varep(\rho_\varep,q)+b(\m_\varep,q), \text { for } (\vv, q)\in V\times Q. 
\eeq
A nonlinear operator $\mathcal A_\varep: (V\times Q )\to (V\times Q)'$ defined by 
\beqs
\intb{\mathcal A_\varep((\uu,p)), (\vv,q) }_{(V\times Q)'\times (V\times Q)} = a_\varep((\uu,p), (\vv,q)). 
\eeqs
Then 
$\mathcal A_\varep$ is continuous, coercive and strictly monotone. 

Applying the theorem of Browder and Minty (see in \cite{zeidler1989}, Thm. 26.A) for every $\tilde f\in (V\times Q)'$, there exists unique solution $(\m_\varep,\rho_\varep)\in V\times Q$ of the operator equation $\mathcal A_\varep (\m_\varep, \rho_\varep) = \tilde f$. In particular, we choose the linear form$ \tilde f$ defined by $\tilde f (\vv, q) :=  -\langle  \rho_b, \vv\cdot \nu  \rangle + (f,q)$, which arises by adding the right hand sides of \eqref{reg-prob}.
Therefore \eqref{reg-prob} has a unique solution.

 The rest of the proof proves $\mathcal A_\varep$ continuous, coercive and strictly monotone.
   
     For the continuity,  
\beq\label{est0}
\begin{aligned}
&\intb{\mathcal A_\varep((\uu_1,p_1)-\mathcal A_\varep((\uu_2,p_2)), (\vv,q) }_{(V\times Q)'\times (V\times Q)}\\
&\qquad=a(\uu_1,\vv)-a(\uu_2,\vv) + \varep(\nabla \uu_1-\nabla\uu_2,\nabla\vv) - b(\vv, p_1-p_2)\\
&\hspace{6cm} +\varep(p_1-p_2,q)+b(\uu_1-\uu_2,q).
\end{aligned}
\eeq
Using the \eqref{F-cont} we have 
\beq\label{est1}
\begin{split}
a(\uu_1,\vv)-a(\uu_2,\vv) &\le \intd{ (1+|\uu_1|^{s-2}+|\uu_2|^{s-2})|\uu_1-\uu_2|, |\vv|}\\
&\le  \norm{(1+|\uu_1|^{s-2}+|\uu_2|^{s-2})|\uu_1-\uu_2|}_{L^{s^*}}\norm{\vv}_{L^s}.
\end{split}
\eeq
Applying  H\"older's inequality leads to
 \beq\label{est2}
 \begin{aligned}
 \norm{(1+|\uu_1|^{s-2}+|\uu_2|^{s-2})|\uu_1-\uu_2|}_{L^{s^*}}
 &\le  \norm{1+|\uu_1|^{s-2}+|\uu_2|^{s-2}}_{0,\frac{s-2}{s}}\norm{\uu_1-\uu_2}_{L^s}\\
 & \le C \left(1+\norm{\uu_1}_{L^s}^{s-2}+ \norm{\uu_2}_{L^s}^{s-2}\right)\norm{\uu_1-\uu_2}_{L^s},
 \end{aligned}
 \eeq  
and
\beq\label{est3}
\begin{split}
&\varep(\nabla\cdot \uu_1-\nabla\cdot\uu_2,\nabla\cdot\vv) - b(\vv, p_1-p_2) +\varep(p_1-p_2,q)+b(\uu_1-\uu_2,q) \\ 
&\quad\le\varep\norm{\nabla \cdot(\uu_1 -\uu_2)}\norm{\nabla\cdot \vv}
+\norm{\nabla \cdot\vv}\norm{p_1-p_2}+\varep\norm{p_1-p_2}\norm{q}+\norm{\nabla \cdot(\uu_1 -\uu_2)}\norm{q}\\
&\quad\le (1+\varep)\left(\norm{\nabla \cdot(\uu_1 -\uu_2)}+\norm{p_1-p_2}\right)\left(\norm{\nabla\cdot \vv}+\norm{q}\right).
\end{split}
\eeq
Combining \eqref{est0}--\eqref{est3} gives
\beqs
\begin{split}
&\intb{\mathcal A_\varep((\uu_1,p_1)-\mathcal A_\varep((\uu_2,p_2), (\vv,q) )}_{(V\times Q)'\times (V\times Q)}\\
&\quad\le  C_\varep\left( 1+\norm{\uu_1}_{L^s}^{s-2}+\norm{\uu_2}_{L^s}^{s-2}\right)\left(\norm{\uu_1-\uu_2}_{L^s}+ \norm{\nabla \cdot(\uu_1 -\uu_2)}+\norm{p_1-p_2}\right)\\
&\hspace{5cm} \times       \left( \norm{\vv}_{L^s}+ \norm{\nabla\cdot \vv}+\norm{q}\right)\\
&\quad\le  C_\varep\left( 1+\norm{\uu_1}_{L^s}^{s-2}+\norm{\uu_2}_{L^s}^{s-2}\right)\left(\norm{\uu_1-\uu_2}_V+\norm{p_1-p_2}_Q\right)\left( \norm{\vv}_V+ \norm{q}_Q\right),
\end{split}
\eeqs
for all $\vv\in V, q\in Q.$
Thus
\beqs
\norm{\mathcal A_\varep((\uu_1,p_1)-\mathcal A_\varep((\uu_2,p_2)}_{(V\times Q)'}\le C_\varep( 1+\norm{\uu_1}_{L^s}^{s-2}+\norm{\uu_2}_{L^s}^{s-2})(\norm{\uu_1-\uu_2}_V+\norm{p_1-p_2}_Q).
\eeqs

For $\mathcal A_\varep$ is the coercive.     
\beqs
\begin{split}
\intb{\mathcal A_\varep(\uu,p), (\uu,p) }_{(V\times Q)'\times (V\times Q)}
&=a(\uu,\uu)+ \varep \left(\norm{\nabla\cdot\uu}^2+ \norm{p}^2 \right)\\
&\ge C \left( \norm{\uu}^2+  \norm{\uu}_{L^s}^{s}\right) +\varep \left( \norm{\nabla \cdot\uu}^{2}  +\norm{p}_Q^2\right)\\
&\ge \min\{C,\varep\}\min\{1,\norm{\uu}_{L^s}^{s-2}\} \left(\norm{\uu}_V^2+\norm{p}_Q^2\right). 
\end{split}
\eeqs  

For $\mathcal A_\varep$ is the strictly monotone. 
\beqs
\begin{split}
&\intb{\mathcal A_\varep (\uu,p)-\mathcal A_\varep(\vv,q), (\uu-\vv,p-q) }_{(V\times Q)'\times (V\times Q)}\\
&\quad =a(\uu,\uu-\vv) - a(\vv,\uu-\vv)+ \varep\left(\norm{\nabla\cdot \uu-\nabla\cdot\vv}^2 +\norm{p-q}^2\right)\\
&\quad\ge C(\norm{\uu-\vv}^2 +\norm{\uu-\vv}_{L^s}^s ) + \varep\left(\norm{\nabla\cdot \uu-\nabla\cdot\vv}^2 +\norm{p-q}^2\right)\\
 &\quad \ge \min\{C,\varep\}\min\{1, \norm{\uu-\vv}_{L^s}^{s-2} \} \norm{\uu-\vv}_V^2 +\varep\norm{p-q}_Q^2 >0,\,  \forall (\uu,p)\neq(\vv, q).    
\end{split}
\eeqs  
\end{proof}

{\bf Step 2.} 
Next, we show that the solution $(\m_\varep, \rho_\varep)$ is bounded independently of $\varep$. To do this we use the following result (see in \cite{PG16} Lemma A.3 or \cite {Sandri1993}~Lemma A.1),
\begin{lemma}
Let $s>1$ and $1/s+1/s^*=1$. Then there exists a constant $C_*>0$ such that
\beq\label{supinfcdn}
C_*\norm{q}_{L^{s^*}}\le \sup_{\vv \in W(\rm{div},\Omega)} \frac{b(\vv,q)}{\norm{\vv}_{ W(\rm{div},\Omega)} }
\eeq 
for all $\vv\in  W(\rm{div},\Omega), q\in L^s(\Omega).$
\end{lemma} 
\begin{lemma}\label{stationary-sol-indep-eps}
There exist constants $\mathcal K_1, \mathcal K_2 >0$, independent of $\varep$, such that for sufficiently small $\varep>0$ the solution $(\m_\varep,\rho_\varep)$ of \eqref{reg-prob} satisfies the following estimates
\beq
\norm{\m_\varep}_V\le \mathcal K_1 \quad \text { and } \quad \norm{\rho_\varep}_Q \le \mathcal K_2.
\eeq
\end{lemma}
\begin{proof}
We begin with a bound for the norm of $\nabla\cdot\m_\varep$. Using the second equation of \eqref{reg-prob} with $q=\nabla\cdot \m_\varep\in L^2(\Omega)$, we obtain
\beqs
\norm{\nabla\cdot \m_\varep}^{2} 
\le  \norm{f}\norm{\nabla\cdot \m_\varep} + \varep \norm{\rho_\varep}\norm{\nabla\cdot \m_\varep}.
\eeqs
Hence 
\beq\label{bound-divm}
\norm{\nabla\cdot \m_\varep} \le  \norm{f} + \varep \norm{\rho_\varep}_Q.
\eeq
Choosing the test functions $(\vv, q)=(\m_\varep, \rho_\varep)$ in \eqref{reg-prob} gives
\beq\label{estRHS1}
\begin{aligned}
a(\m_\varep, \m_\varep)+\varep(\nabla\cdot\m_\varep, \nabla\cdot\m_\varep)+ \varep(\rho_\varep, \rho_\varep)&=- \langle  \rho_b, \m_\varep\cdot \nu  \rangle + (f,\rho_\varep)  \\
&=-\intd{ \nabla \cdot \m_\varep,  \rho_b }-\intd{ \nabla\rho_b , \m_\varep}+ \intd {f,\rho_\varep} \\
&\le \norm{\rho_b}_{V'} \left(\norm{\m_\varep}_{L^s} + \norm{\nabla \cdot \m_\varep} \right)  +\norm{f}\norm{\rho_\varep} .
\end{aligned}
\eeq
Using \eqref{bound-divm} we find that
\beq\label{bound-mvarep}
C\norm{\m_\varep}_{L^s}^{s} +\varep\norm{\nabla \cdot \m_\varep}^2 +\varep\norm{\rho_\varep}_Q^2
\le  \norm{\rho_b}_{V'} \left(\norm{\m_\varep}_{L^s} + \norm{f} + \varep \norm{\rho_\varep}_Q \right)  +\norm{f}\norm{\rho_\varep}_{Q}. 
\eeq
To bound $\rho_\varep$ we employ the inf-sup condition \eqref{supinfcdn}. The first equation in \eqref{reg-prob} and the above estimate for $\varep\norm{ \nabla\cdot\m_\varep}_{L^s}^2$, we have 
\beqs
\begin{split}
C_*\norm{\rho_\varep}_Q&\le \sup_{\vv\in V} \frac{b(\vv,\rho_\varep)}{\norm{\vv}_V} 
= \sup_{\vv\in V} \frac{a(\m_\varep,\vv) +\varep(\nabla\cdot\m_\varep,\nabla\cdot\vv)+ \langle  \rho_b, \vv\cdot \nu  \rangle}{\norm{\vv}_V}\\
&\le\sup_{\vv\in V} \frac{C(\norm{\m_\varep}_{L^s}+\norm{\m_\varep}_{L^s}^{s-1} ) \norm{ \vv}_{L^s}+\varep\norm{\nabla\cdot \m_\varep}\norm{\nabla\cdot \vv} +\norm{\rho_b}_{V'} (\norm{\vv}_{L^s} + \norm{\nabla \cdot \vv} )   }{\norm{\vv}_V}\\
&\le C(\norm{\m_\varep}_{L^s}+\norm{\m_\varep}_{L^s}^{s-1} ) +\varep\norm{\nabla\cdot \m_\varep}^{2} +\norm{\rho_b}_{V' }\\
&\le C(\norm{\m_\varep}_{L^s}+\norm{\m_\varep}_{L^s}^{s-1} ) +\varep\Big( \norm{f} + \varep \norm{\rho_\varep}_Q \Big)^{2} +\norm{\rho_b}_{V' }\\
&\le C(\norm{\m_\varep}_{L^s}+\norm{\m_\varep}_{L^s}^{s-1} ) +2\varep  \norm{f}^2 + 2\varep^{3}  \norm{\rho_\varep}_Q +\norm{\rho_b}_{V' }. 
\end{split}
\eeqs
for some constant $C_*>0$. Hence, for sufficiently small $\varep$ (e.g. $\varep\le \sqrt[3]{C_*/2}$  ), 
\beq\label{bound-rhovarep}
\norm{\rho_\varep}_Q\le C\left(\norm{\m_\varep}_{L^s}+\norm{\m_\varep}_{L^s}^{s-1}  +\norm{f}^2 +  \norm{\rho_b}_{V' }\right). 
\eeq  
Substituting \eqref{bound-rhovarep} into \eqref{bound-mvarep}, we find that 
\begin{align*}
 \norm{\m_\varep}_{L^s}^{s} 
&\le C\norm{\rho_b}_{V'} \left(\norm{\m_\varep}_{L^s} +\norm{f} \right)\\
&\quad+ C\left(\norm{\rho_b}_{V'}+\norm{f}\right)
\left( \norm{\m_\varep}_{L^s}^{s-1}+\norm{\m_\varep}_{L^s}  +\norm{f}^2 +  \norm{\rho_b}_{V' }\right).
\end{align*}
Then by using Young's inequality we obtain
\beq
 \norm{\m_\varep}_{L^s}^{s} \le C K_1,
\eeq
where 
\beq\label{K1def}
K_1= \left(\norm{\rho_b}_{V'} + \norm{f}\right)\left(\norm{\rho_b}_{V'} + \norm{f}^2\right) +\norm{\rho_b}_{V'}^{s} + \norm{f}^{s}+1.
\eeq

Insert this into \eqref{bound-rhovarep} yields 
\beqs
\norm{\rho_\varep}_Q\le C\mathcal K_2, 
\eeqs
where
$
\mathcal K_2=K_1^{1/s}+K_1^{(s-1)/s}  +\norm{f}^2 +  \norm{\rho_b}_{V'}. 
$
Using this estimate in \eqref{bound-divm} yields 
\beqs
\norm{\nabla\cdot \m_\varep} \le  \norm{f} +  C\mathcal K_2 \le C(K_1+\mathcal K_2).
\eeqs
Therefore $\norm{\m_\varep}_V\le C\mathcal K_1$ where $\mathcal K_1 =K_1+\mathcal K_2$ independence of $\varep$.
\end{proof}

{\bf Step 3.} Adding the left hand side of \eqref{WeakStationaryProb} we obtain the following nonlinear form defined on $V\times Q$ by
\beqs
a( (\m, \rho), (\vv,q)  ):= a(\m, \vv) - b(\vv,\rho) + b(\m, q).
\eeqs
Consider the nonlinear operator $\mathcal A: V\times Q \to (V\times Q)'$ defined by 
\beqs
\intb{\mathcal A(\uu,p), (\vv,q)}_{ (V\times Q)' \times (V\times Q) }:=a( (\uu, p), (\vv,q)  ).
\eeqs
Set $\varep=1/n$, and let $(\m_n, \rho_n)$ be the unique solution of the regularized problem \eqref{reg-prob}. Since $(\m_n,\rho_n)$ is bounded sequence in $V\times Q,$ there exist a weakly convergent subsequence, again denoted by $(\m_n,\rho_n)$, with weak limit $(\m,\rho)\in V\times Q.$ 
\beq
\begin{split}
\norm{\mathcal A(\m_n,\rho_n) -\tilde f}_{(V\times Q)'} &= \sup_{(\vv,q)\neq {\bf 0} } \frac{| a((\m_n,\rho_n), (\vv,q))-\tilde f(\vv,q) |}{ \norm{(\vv,q)}_{V\times Q} }\\
&=\sup_{(\vv,q)\neq {\bf 0} } \frac{| a(\m_n,\vv) -b(\vv, \rho_n)+b(\m_n,q) - \tilde f(\vv,q) |}{ \norm{(\vv,q)}_{V\times Q} }.
\end{split}
\eeq 
Noting from \eqref{reg-prob} that
\begin{align*}
\left|a(\m_n,\vv) -b(\vv, \rho_n)+b(\m_n,q) - \tilde f(\vv,q)\right|
&=\frac 1 n \left|\intd{ \nabla\cdot\m_n,\nabla\cdot\vv}+ \intd{\rho_n,q} \right|\\
&\le \frac 1 n \left(\norm{\nabla\cdot\m_n}\norm{\nabla\cdot\vv} + \norm{\rho_n}\norm{q} \right)\\
&\le \frac 1 n \left(\norm{\nabla\cdot\m_n} + \norm{\rho_n} \right)\left( \norm{\vv}_V + \norm{q}_Q \right)\\
&= \frac 1 n \left(\norm{\nabla\cdot\m_n} + \norm{\rho_n}_{Q} \right)\norm{(\vv,q)}_{V\times Q} .
\end{align*}
Thus 
\beq
\begin{split}
\norm{\mathcal A(\m_n,\rho_n) -\tilde f}_{(V\times Q)'} 
\le \frac{C}{ n}\left( \norm{\nabla\cdot\m_n} +\norm{\rho_n}_Q  \right)\overset{n\to\infty}{\longrightarrow} 0.
\end{split}
\eeq 

The sequence $\mathcal A(\m_n,\rho_n)$ converges strongly in $(V\times Q)'$ to $\tilde f$ defined by $\tilde f(\vv,q) :=- \langle  \rho_b, \vv\cdot \nu  \rangle +(f,q).$ Thus we can
conclude that $\mathcal A(\m,\rho) = \tilde f$ in $(V\times Q)' $ (see e.g. \cite{z90}, p. 474), i.e., $(\m,\rho)$ is a solution of problem~\eqref{equivform}.

{\bf Step 4.} To show the uniqueness we consider two solutions $(\m_1, \rho_1)$ and  $(\m_2,\rho_2)$ of \eqref{WeakStationaryProb}. Using the test function $\vv=\m_1-\m_2,$ and $q=\rho_1-\rho_2$ we obtain 
\beq
\begin{aligned}
a(\m_1,\m_1-\m_2)-a(\m_2,\m_1-\m_2)  -\left( b(\m_1-\m_2,\rho_1)- b(\m_1-\m_2,\rho_2)  \right) = 0, \\
b(\m_1, \rho_1-\rho_2)-b(\m_2, \rho_1-\rho_2)=0.
\end{aligned}
\eeq
Adding these equations yield 
\beqs
\begin{split}
0=a(\m_1,\m_1-\m_2)-a(\m_2,\m_1-\m_2) 
\ge C_2 \left(\norm{ \m_1-\m_2}^2+\norm{ \m_1-\m_2}_{L^s}^s  \right) .
\end{split} 
\eeqs 
It follows that $\m_1=\m_2$. If $\m\in V$ is given then $\rho\in L^2(\Omega)$ is defined as a solution of the variational equation $b(\vv, \rho)= g(\vv)+a(\m,\vv)   $ for all $\vv\in V$. Therefore the uniqueness of $\rho$ is directly consequence of the injective of the operator $B': L^2(\Omega) \to V'$ ( see in \cite{BF91} \S II, Thm. 1.6).
\end{proof}



\section {The semi-discrete problem}\label{SemiProb}

We return to the transient problem governed by \eqref{main-sys}. We discretize \eqref{main-sys} in time using the implicit Euler method. This yields not only a method to solve the transient problem numerically, but also an approach to prove its solvability, the technique of semi-discretization. We define a partition $0 = t_0 < t_1 < . . . < t_J = T$ of the segment $(0, T)$ into $J$ intervals of constant length $\Delta t = T/J$, i.e., $t_j = j\Delta t$ for $j = 0, \ldots ,J$. In the following for $j = 0, \ldots ,J$ we use the denotations
$\rho^j := \rho(\cdot, jt)$ and $\m^j := \m(\cdot, jt)$ for the unknown solutions and, analogously defined, $ \rho_b^j$ for the boundary conditions and $f^j$ for the source term. 
\beq\label{semidiscreteProb}
\begin{aligned}
\left(\sum_{i=0}^N a_i^j|\m^j|^{\alpha_i}\right) \m^j = -\nabla \rho^j,  \quad x\in\Omega, \\
\phi\frac{\rho^j -\rho^{j-1}}{\Delta t} +\nablad \m^j=f^j, \quad x\in\Omega,\\
\rho = -\rho_b^j,  \quad x\in\partial\Omega,\\
\rho(x,0)=\rho_0(x), \quad x\in\Omega.
\end{aligned}
\eeq
For each $j\in \{1,\cdots, J\}$ we make the following assumptions
\begin{itemize}
\item [$H_1$.] $0<\underline{\phi}\le \phi(x) \le \overline\phi<\infty.$
\item [$H_2$.] $f^j \in L^2(\Omega)$ .
\item [$H_3$.] $\rho_b^j\in W^{1/s, s}  (\partial \Omega)$,  $\rho_0\in W_0^{1,s^*}(\Omega)\cap L^2(\Omega)$.
\item [$H_4$.] $a_i^j(x)\in L^\infty(\Omega), i=0,\ldots,N$ .
\end{itemize}
{\bf Mixed formulation of the semi-discrete problem.} 
The discretization in time of the continuity equation \eqref{semidiscreteProb}  with the implicit Euler method yields for each $j \in \{1, . . . ,J\}.$ Then 
  $\{\m^j,\rho^j\}  \in V\times Q$ such that
\beq\label{w1}
\begin{aligned}
\displaystyle \intd{ \big(\sum_{i=0}^N a_i^j|\m^j|^{\alpha_i}\big)\m^j ,\vv } - \intd{ \rho^j,\nabla \vv} =-\langle  \rho^j_b, \vv\cdot \nu  \rangle,   \quad \vv\in V,\\
\displaystyle \intd{ \frac{\phi\rho^j }{\Delta t},q}+\intd{\nabla \cdot\m^j, q} =  \intd{ f^j, q }+\intd{ \frac{\phi\rho^{j-1}}{\Delta t},q },  \quad q\in Q,
\end{aligned}
\eeq
with $\rho^0 = \rho_0(x).$
Using $a$ and $b$ defined in Section~\ref{StatProb}, we write the mixed formulation \eqref{w1} in the following way.
Find $(\m^j, \rho^j)\in V \times Q$, such that
\beq\label{eqatstepj}
\begin{aligned}
a(\m^j, \vv) - b(\vv, \rho^j) = -\langle  \rho^j_b, \vv\cdot \nu  \rangle, \quad \forall \vv\in V,\\
\intd{ \frac{\phi\rho^j }{\Delta t},q} + b(\m^j,q) =\intd { \bar f^j ,q }, \quad\forall q\in Q, 
\end{aligned}
\eeq
where 
$\bar f^j = f^j +\frac{\phi}{\Delta t}\rho^{j-1}.
$

The remainder of this section we restrict our considerations problem \eqref{eqatstepj} to a fixed time step $j$. For simplicity, we omit the superscript $j$.
\subsection{Regularization of the semi-discrete problem} 
We use the technique of regularization again.  For the fixed $\varep >$ 0, we consider the following regularized problem. Find $(\m_\varep, \rho_\varep) \in V \times Q $ such that
\beq\label{reg-eqatstepj}
\begin{aligned}
a(\m_\varep, \vv)+\varep(\nablad\m_\varep,\nablad\vv ) - b(\vv, \rho_\varep) = -\langle  \rho_b, \vv\cdot \nu  \rangle,&\quad  \forall \vv\in V,\\
\intd{ \frac{\phi}{\Delta t}\rho_\varep,q}+ b(\m_\varep,q) =\intd { \bar f ,q },&\quad \forall q\in Q.
\end{aligned}
\eeq
In the same manner as Lemma~\ref{StatSol} we obtain
\begin{lemma}
For every $\varep$ there exists a unique solution $(\m_\varep, \rho_\varep) \in V \times Q$ of the regularized semidiscrete problem \eqref{reg-eqatstepj}.
\end{lemma}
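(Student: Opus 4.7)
\medskip

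The plan is to mimic the proof of Lemma~\ref{StatSol} essentially verbatim, since the only structural difference between the stationary regularized problem \eqref{reg-prob} and the semi-discrete regularized problem \eqref{reg-eqatstepj} is that the coefficient $\varep$ in front of the $(\rho_\varep,q)$-term is replaced by the weight $\phi/\Delta t$. Under hypothesis $H_1$ this weight is bounded below by $\underline\phi/\Delta t>0$ and above by $\overline\phi/\Delta t$, so it plays exactly the same role in the coercivity and monotonicity estimates as the parameter $\varep$ did before, only with a different positive constant (independent of $\varep$ but depending on $\Delta t$).

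Concretely, I would first assemble the left-hand side of \eqref{reg-eqatstepj} into a single form
\[
a_\varep\bigl((\m_\varep,\rho_\varep),(\vv,q)\bigr)
:= a(\m_\varep,\vv)+\varep(\nabla\cdot\m_\varep,\nabla\cdot\vv)-b(\vv,\rho_\varep)
+\Bigl(\tfrac{\phi}{\Delta t}\rho_\varep,q\Bigr)+b(\m_\varep,q),
\]
and the right-hand side into the linear functional
$\tilde f(\vv,q):=-\langle\rho_b,\vv\cdot\nu\rangle+(\bar f,q)$ on $V\times Q$.
Define $\mathcal A_\varep:V\times Q\to (V\times Q)'$ by
$\langle \mathcal A_\varep(\uu,p),(\vv,q)\rangle=a_\varep((\uu,p),(\vv,q))$.
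The existence and uniqueness statement is then equivalent to solvability of
$\mathcal A_\varep(\m_\varep,\rho_\varep)=\tilde f$, for which I would again invoke the Browder--Minty theorem after verifying that $\mathcal A_\varep$ is continuous, coercive, and strictly monotone.

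For continuity, the same chain of estimates \eqref{est0}--\eqref{est3} applies: the $a$-part is handled by \eqref{F-cont} and H\"older's inequality exactly as before, while the divergence, pressure, and coupling terms are bounded using Cauchy--Schwarz and $H_1$ to replace $\phi/\Delta t$ by $\overline\phi/\Delta t$. For coercivity, testing with $(\vv,q)=(\uu,p)$ gives
\[
\langle \mathcal A_\varep(\uu,p),(\uu,p)\rangle
=a(\uu,\uu)+\varep\|\nabla\cdot\uu\|^2+\bigl(\tfrac{\phi}{\Delta t}p,p\bigr)
\ge C\bigl(\|\uu\|^2+\|\uu\|_{L^s}^s\bigr)+\varep\|\nabla\cdot\uu\|^2+\tfrac{\underline\phi}{\Delta t}\|p\|^2,
\]
which, as in Lemma~\ref{StatSol}, controls $\min\{1,\|\uu\|_{L^s}^{s-2}\}(\|\uu\|_V^2+\|p\|_Q^2)$ from below up to a positive constant. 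Strict monotonicity follows the same way: using \eqref{F-mono} for the difference of the $a$-terms and positivity of $\varep$ and $\phi/\Delta t$ for the remaining quadratic terms yields
\[
\langle \mathcal A_\varep(\uu,p)-\mathcal A_\varep(\vv,q),(\uu-\vv,p-q)\rangle
\ge C_2\bigl(\|\uu-\vv\|^2+\|\uu-\vv\|_{L^s}^s\bigr)+\varep\|\nabla\cdot(\uu-\vv)\|^2+\tfrac{\underline\phi}{\Delta t}\|p-q\|^2,
\]
which is strictly positive whenever $(\uu,p)\neq(\vv,q)$.

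I do not anticipate any real obstacle here: the argument is a direct transcription of Lemma~\ref{StatSol}, and the only bookkeeping point is to carry the constant $\underline\phi/\Delta t$ in place of $\varep$ in the $L^2$-norm of the pressure variable, ensuring that none of the constants in the coercivity and monotonicity bounds degenerate. Once $\mathcal A_\varep$ has been shown continuous, coercive, and strictly monotone, Browder--Minty (\cite{zeidler1989}, Thm.~26.A) produces the unique $(\m_\varep,\rho_\varep)\in V\times Q$ with $\mathcal A_\varep(\m_\varep,\rho_\varep)=\tilde f$, which is exactly the asserted solution of \eqref{reg-eqatstepj}.
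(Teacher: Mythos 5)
Your proposal is correct and matches the paper's intent exactly: the paper proves this lemma simply by remarking that it follows ``in the same manner as Lemma~\ref{StatSol},'' i.e., by rerunning the Browder--Minty argument with the term $\varep(\rho_\varep,q)$ replaced by $(\tfrac{\phi}{\Delta t}\rho_\varep,q)$, whose lower bound $\underline\phi/\Delta t>0$ from $H_1$ supplies the needed coercivity and strict monotonicity in the $Q$-variable. Your write-up fills in precisely the bookkeeping the paper leaves implicit, with no divergence in method.
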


Next, we show that the solution $(\m_\varep, \rho_\varep)$ of \eqref{reg-eqatstepj} is bounded independently of $\varep$.
\begin{lemma}\label{boundedness-discrete-sol}
There exist constants $\mathcal K_1, \mathcal K_2 >0$, independent of $\varep$, such that for sufficiently small $\varep>0$ the solution $(\m_\varep,\rho_\varep)$ of \eqref{reg-eqatstepj} satisfies the following estimates:
\beq\label{discrete-bound}
\norm{\m_\varep}_V\le \mathcal K_1 \quad \text { and } \quad \norm{\rho_\varep}_Q \le \mathcal K_2.
\eeq
\end{lemma}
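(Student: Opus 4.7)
The plan is to test the regularized system \eqref{reg-eqatstepj} with $(\vv,q)=(\m_\varep,\rho_\varep)$, exploiting the new term $\intd{\frac{\phi}{\Dt}\rho_\varep,\rho_\varep}$, which, together with the coercivity of $a$, replaces the role played by the inf-sup argument in Lemma \ref{stationary-sol-indep-eps}. Adding the two resulting equations cancels the mixed pair $\pm b(\m_\varep,\rho_\varep)$, giving
\[
a(\m_\varep,\m_\varep)+\varep\norm{\nabla\cdot\m_\varep}^2+\intd{\tfrac{\phi}{\Dt}\rho_\varep,\rho_\varep} = -\intb{\rho_b,\m_\varep\cdot\nu}+\intd{\bar f,\rho_\varep}.
\]
Applying \eqref{F-mono} with $y=0$ bounds the first left-hand-side term below by $C(\norm{\m_\varep}^2+\norm{\m_\varep}_{L^s}^s)$, while assumption $H_1$ gives $\intd{\frac{\phi}{\Dt}\rho_\varep,\rho_\varep}\geq \frac{\underline{\phi}}{\Dt}\norm{\rho_\varep}^2$. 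For the right-hand side I would use the duality bound $|\intb{\rho_b,\m_\varep\cdot\nu}|\leq \norm{\rho_b}_{V'}(\norm{\m_\varep}_{L^s}+\norm{\nabla\cdot\m_\varep})$ and Cauchy--Schwarz on $(\bar f,\rho_\varep)$.

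The only new subtlety compared with the stationary argument is the appearance of $\norm{\nabla\cdot\m_\varep}$ on the right, which cannot be absorbed into the $\varep\norm{\nabla\cdot\m_\varep}^2$ term with an $\varep$-independent constant. To handle this, I would produce an auxiliary $\varep$-free bound by choosing $q=\nabla\cdot\m_\varep\in L^2(\Omega)$ in the second equation of \eqref{reg-eqatstepj}, which (after Cauchy--Schwarz and using $H_1$) yields
\[
\norm{\nabla\cdot\m_\varep}\leq \norm{\bar f}+\frac{\overline{\phi}}{\Dt}\norm{\rho_\varep}.
\]
Plugging this into the main identity converts the offending $\norm{\rho_b}_{V'}\norm{\nabla\cdot\m_\varep}$ contribution into terms in $\norm{\rho_\varep}$ and data only.

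At that point every right-hand-side quantity is either a datum or involves $\norm{\m_\varep}_{L^s}$ or $\norm{\rho_\varep}$ to a sub-critical power. Applying Young's inequality with conjugate pair $(s,s^*)$ to $\norm{\rho_b}_{V'}\norm{\m_\varep}_{L^s}$ and with exponent $2$ to every term involving $\norm{\rho_\varep}$, I would absorb a small fraction of $C\norm{\m_\varep}_{L^s}^s$ and of $\frac{\underline{\phi}}{\Dt}\norm{\rho_\varep}^2$ back into the left-hand side. This yields $\varep$-independent bounds on $\norm{\m_\varep}_{L^s}$ and $\norm{\rho_\varep}$, i.e.\ $\norm{\rho_\varep}_Q\leq \mathcal K_2$. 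Re-inserting the bound on $\norm{\rho_\varep}$ into the auxiliary estimate for $\norm{\nabla\cdot\m_\varep}$ then gives $\norm{\m_\varep}_V=\norm{\m_\varep}_{L^s}+\norm{\nabla\cdot\m_\varep}\leq \mathcal K_1$, as claimed. The main obstacle is nothing deeper than careful bookkeeping of the Young constants so that the coefficient multiplying $\norm{\rho_\varep}^2$ after absorption remains strictly below $\underline{\phi}/\Dt$; this forces the smallness condition on $\varep$ referred to in the statement.
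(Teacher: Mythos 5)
Your proposal is correct and follows essentially the same route as the paper: test with $(\vv,q)=(\m_\varep,\rho_\varep)$, use the coercivity \eqref{F-mono} and $H_1$, obtain the auxiliary bound $\norm{\nabla\cdot\m_\varep}\le\norm{\bar f}+\tfrac{\overline\phi}{\Dt}\norm{\rho_\varep}$ from the second equation with $q=\nabla\cdot\m_\varep$, substitute it into the energy identity, and close with Young's inequality. The only minor difference is that the paper performs the divergence estimate first, and in fact no smallness of $\varep$ is actually needed for the absorption; otherwise the arguments coincide.
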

\begin{proof} 
As in the proof of Lemma~\ref{stationary-sol-indep-eps} we begin with an estimate for the norm of $\nabla\cdot\m_\varep$.
Using the second equation of \eqref{reg-eqatstepj} with $q= \nabla\cdot \m_\varep\in L^2(\Omega)$ we obtain
\beq\label{divm-eps}
\norm{\nabla\cdot \m_\varep} \le  \norm{\bar f} + \frac{\bar\phi}{\Delta t} \norm{\rho_\varep}_Q.
\eeq
The estimation of $\norm{\m_\varep}_{L^s}$ is based on choosing the test functions $(\vv,q)=(\m_\varep, \rho_\varep)$ in \eqref{reg-eqatstepj}. Then we obtain the estimate 
\beq\label{asd}
a(\m_\varep, \m_\varep)+\varep(\nablad\m_\varep,\nablad\m_\varep)+\intd{ \frac{\phi\rho_\varep }{\Delta t},\rho_\varep}
\le \norm{\rho_b}_{V'} \left(\norm{\m_\varep}_{L^s} + \norm{\nabla \cdot \m_\varep} \right)  +\norm{\bar f}\norm{\rho_\varep} .
\eeq
Thanks to the monotonicity of $F$, estimate \eqref{divm-eps}. It follows from \eqref{asd}  that
\begin{multline*}
C(\norm{\m_\varep}_{L^s}^{s}+ \norm{\m_\varep}^{2})+ \varep \norm{ \nabla\cdot \m_\varep}^2 +\frac{\underline \phi}{\Delta t}\norm{\rho_\varep}_Q^2 \\
\le  \norm{\rho_b}_{V'} \left(\norm{\m_\varep}_{L^s} + \norm{\bar f}+\frac{\bar\phi}{\Delta t}\norm{\rho_\varep}_Q\right)  +\norm{\bar f}\norm{\rho_\varep}_{Q}. 
\end{multline*}
This and Young's inequality lead to 
 \beqs
\frac{C}{2} \norm{\m_\varep}_{L^s}^{s}+\frac{\underline \phi}{2\Delta t}\norm{\rho_\varep}_Q^2 
\le  C' \norm{\rho_b}_{V'}^{s^*} + \norm{\rho_b}_{V'}\norm{\bar f}+\left(\frac{\bar\phi}{\Delta t}\norm{\rho_b}_{V'}  +\norm{\bar f}\right)^2\frac{\Delta t}{2\underline \phi}, 
\eeqs
which gives  
\beq\label{m-eps}
\norm{\m_\varep}_{L^s}\le C \left( \norm{\rho_b}_{V'}^{s^*} + \norm{\rho_b}_{V'}^2  +\norm{\bar f}^2\right)^{1/s}, 
\eeq
and 
\beq\label{rho-eps}
\norm{\rho_\varep}_Q\le C\left( \norm{\rho_b}_{V'}^{s^*} + \norm{\rho_b}_{V'}^2  +\norm{\bar f}^2\right)^{1/2}.
\eeq
Plug \eqref{rho-eps} into \eqref{divm-eps} gives
\beq\label{new-divm-eps}
\begin{aligned}
\norm{\nabla\cdot \m_\varep} 
&\le \norm{\bar f}+ C\left( \norm{\rho_b}_{V'}^{s^*} + \norm{\rho_b}_{V'}^2  +\norm{\bar f}^2\right)^{1/2} \\
&\le C \left( \norm{\rho_b}_{V'}^{s^*/2} + \norm{\rho_b}_{V'}  +\norm{\bar f}\right).
\end{aligned}
\eeq 
Thus \eqref{discrete-bound} follows directly from \eqref{m-eps}--\eqref{new-divm-eps}.  
\end{proof}
%
\subsection{Solvability of the semi-discrete problem}
In the same manner as in Section 1 we take limit $\varep\to 0$ and obtain
the existence of a solution of the semi-discrete problem \eqref{w1}. 
\begin{theorem}\label{Sol-semidiscreteProb}
The mixed formulation \eqref{w1} of the semi-discrete problem \eqref{semidiscreteProb} possesses a unique solution $(\m,\rho)\in W(\rm{div};\Omega) \times L^2(\Omega)$.
\end{theorem}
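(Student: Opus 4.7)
The plan is to follow the template established in the proof of Theorem~\ref{SolofStationaryProb}, adapted to accommodate the zero-order term $\phi\rho/\Delta t$ that arises from time discretization. Existence of the regularized solution $(\m_\varep,\rho_\varep)$ and the uniform bounds in Lemma~\ref{boundedness-discrete-sol} are already at our disposal, so only the passage to the limit $\varep\to 0$ and the uniqueness remain to be verified.

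For existence I would set $\varep = 1/n$ and consider the sequence $(\m_n,\rho_n)$ of solutions to \eqref{reg-eqatstepj}. By Lemma~\ref{boundedness-discrete-sol} this sequence is bounded in the reflexive Banach space $V\times Q$, so after extraction one obtains a weak limit $(\m,\rho)\in V\times Q$. Define the nonlinear operator $\mathcal A : V\times Q \to (V\times Q)'$ associated with the left-hand side of the unregularized mixed formulation \eqref{eqatstepj}, and the linear form $\tilde f(\vv,q) := -\langle \rho_b, \vv\cdot\nu\rangle + (\bar f, q)$. Subtracting \eqref{reg-eqatstepj} from the action of $\mathcal A$ on $(\m_n,\rho_n)$ and applying Cauchy--Schwarz gives
\[
\norm{\mathcal A(\m_n,\rho_n) - \tilde f}_{(V\times Q)'}
\le \frac{1}{n}\bigl(\norm{\nabla\cdot\m_n} + \norm{\rho_n}\bigr)
\le \frac{C}{n},
\]
so $\mathcal A(\m_n,\rho_n) \to \tilde f$ strongly in $(V\times Q)'$. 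Since $\mathcal A$ is monotone, hemicontinuous, and coercive, the Minty-type identification used at the end of Step 3 of Theorem~\ref{SolofStationaryProb} (and cited there as \cite{z90}, p.~474) then yields $\mathcal A(\m,\rho) = \tilde f$, i.e., $(\m,\rho)$ solves \eqref{w1}.

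For uniqueness the zero-order term $\phi\rho/\Delta t$ actually makes the argument easier than in the stationary case, because it gives direct $L^2$-control of $\rho$ without recourse to an inf-sup condition. Given two solutions $(\m_1,\rho_1)$ and $(\m_2,\rho_2)$, I would test the first equation of \eqref{eqatstepj} with $\vv=\m_1-\m_2$ and the second with $q=\rho_1-\rho_2$, subtract pair-wise, and add. The terms $\pm b(\m_1-\m_2,\rho_1-\rho_2)$ cancel, leaving
\[
a(\m_1,\m_1-\m_2) - a(\m_2,\m_1-\m_2)
+ \intd{\frac{\phi(\rho_1-\rho_2)}{\Delta t},\rho_1-\rho_2} = 0.
\]
By the monotonicity estimate \eqref{F-mono} and hypothesis $H_1$, the left-hand side is bounded below by $C_2\bigl(\norm{\m_1-\m_2}^2+\norm{\m_1-\m_2}_{L^s}^s\bigr) + (\underline\phi/\Delta t)\norm{\rho_1-\rho_2}^2$, which forces $\m_1=\m_2$ and $\rho_1=\rho_2$ simultaneously.

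The principal obstacle I anticipate is the identification of the weak limit: weak convergence of $\m_n$ in $L^s$ does not directly give $a(\m_n,\vv)\to a(\m,\vv)$, since $a$ is nonlinear in its first argument. This is handled in the standard way by combining the strong convergence of $\mathcal A(\m_n,\rho_n)$ in $(V\times Q)'$ with the monotonicity of $\mathcal A$, following verbatim the argument of Step 3 in the stationary case. Everything else is a direct adaptation: the uniform bounds are already established in Lemma~\ref{boundedness-discrete-sol}, the passage to the limit of the linear data on the right is trivial, and the uniqueness argument is strictly easier than its stationary counterpart.
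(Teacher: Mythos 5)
Your proposal is correct and follows essentially the same route as the paper: regularization with $\varep=1/n$, the uniform bounds of Lemma~\ref{boundedness-discrete-sol}, strong convergence of $\mathcal A(\m_n,\rho_n)$ to $\tilde f$ in $(V\times Q)'$ combined with monotonicity to identify the weak limit, and uniqueness by testing with differences, where the zero-order term $\phi\rho/\Delta t$ gives $\rho_1=\rho_2$ directly. The only cosmetic difference is that your bound $\tfrac1n(\norm{\nabla\cdot\m_n}+\norm{\rho_n})$ carries a harmless extra term, since the regularization \eqref{reg-eqatstepj} perturbs only the first equation.
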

\begin{proof}
Like in the proof of Theorem \ref{SolofStationaryProb} we add both equations in \eqref{eqatstepj} and obtain the nonlinear form a, defined on $(V \times Q) \times (V \times Q)'$, and the linear form $\tilde f \in (V \times Q)'$, defined by
\beqs
a( (\m,\rho),(\vv,q) ):=a(\m,\vv)- b(\vv,\rho)+\intd{ \frac{\phi \rho }{\Delta t},q} +b(\m,q), \quad \tilde f(\vv,q)=- \intb{\rho_b,\vv\cdot\nu} +\intd{\bar f,q}. 
\eeqs
Again, the operator $\mathcal A : V \times Q \to (V \times Q)'$ is defined by $$\intb{\mathcal A(\uu, p), (\vv, q)}_{(V \times Q)'\times (V \times Q)} = a((\uu, p), (\vv, q)).$$ Choosing $\varep = 1/n$, we obtain a sequence of unique solutions $(\m_n, \rho_n)$ of the regularized problems \eqref{reg-eqatstepj}. Owing to Lemma~\ref{boundedness-discrete-sol} the sequence $((\m_n, \rho_n))_{n\in \N}$ is bounded in $V \times Q$. Thus there is a weakly convergent
subsequence, again denoted by $((\m_n, \rho_n))_{n\in \N}$, which converges to $(\m, \rho) \in V \times Q$. In the same manner as in the proof of Theorem~\ref{SolofStationaryProb} we obtain the identity $\mathcal A(\m, \rho) = \tilde f$ in $(V \times Q)'$, i.e., $(\m, \rho)$ is a solution of the semi-discrete mixed formulation \eqref{w1}.
 
To show the uniqueness we consider two solutions $(\m_1, \rho_1)$ and  $(\m_2,\rho_2)$ of \eqref{eqatstepj}. Using the test functions $\vv=\m_1-\m_2,$ and $q=\rho_1-\rho_2$, we obtain 
\begin{align*}
a(\m_1,\m_1-\m_2)-a(\m_2,\m_1-\m_2)  - b(\m_1-\m_2,\rho_1-\rho_2)= 0, \\
\intd{\frac{\phi (\rho_1 -\rho_2)}{\Delta t}, \rho_1-\rho_2} + b(\m_1-\m_2, \rho_1-\rho_2)=0.
\end{align*}
Adding the two equations yield 
\beqs
\begin{split}
0&=a(\m_1,\m_1-\m_2)-a(\m_2,\m_1-\m_2) + \intd{\frac{\phi (\rho_1 -\rho_2)}{\Delta t}, \rho_1-\rho_2}\\
&\ge C_2 \left(\norm{ \m_1-\m_2}^2+\norm{ \m_1-\m_2}_{L^s}^s  \right) + \frac{\underline \phi}{\Delta t} \norm{\rho_1-\rho_2}^2.
\end{split} 
\eeqs 
It follows that $\m_1=\m_2$ and $\rho_1=\rho_2$ a.e. 
\end{proof}
\section {The transient problem}\label{TransProb}
Finally, we address the continuous transient problem. Due to the lack of regularity of the solution $\m$, it is not possible to handle more general boundary conditions as in the previous sections.  We restrict our considerations here to the case of homogeneous Dirichlet boundary conditions
\beq\label{TProb}
\begin{aligned}
\left(\sum_{i=0}^N a_i(x,t)|\m(x,t)|^{\alpha_i}\right) \m(x,t) = -\nabla \rho(x,t), & & \quad (x,t)\in\Omega\times (0,T) , \\
\phi(x)\rho_t(x,t) + \nablad\m(x,t)=f(x,t), & & \quad(x,t)\in\Omega\times (0,T),\\
\rho(x,t) =0, & & \quad (x,t)\in\partial\Omega \times (0,T) ,\\
\rho(x,0)=\rho_0(x), & & \quad x\in\Omega.
\end{aligned}
\eeq
We make the following assumptions
\begin{itemize}
\item [$H_1$.]   $0<\underline{\phi}\le \phi(x) \le \overline\phi<\infty.$
\item [$H_2$.] $f \in L^\infty (0,T;L^2(\Omega)).$
\item [$H_3$.] $\rho_0\in W_0^{1,s^*}(\Omega)\cap L^2(\Omega)$.  
\item [$H_4$.] $a_i(\cdot,t)\in L^\infty(\Omega), i=0,\ldots,N$.
\end{itemize}
Furthermore, we require these coefficient functions to be Lipschitz continuous in time, i.e., there exist a constant $L$ such that for every $0 \le t_1 \le t_2 \le T,$
\beqs
\norm{a_i(t_1)-a_i(t_2)}_{L^\infty}\le L |t_1-t_2|, \quad \text { and }\quad \norm{f(t_1)-f(t_2)}\le L|t_1-t_2|. 
\eeqs

\subsection{A priori estimates for the solutions of the semi-discrete problems}
As mentioned above we use the technique of semi-discretization in time  (see in \cite{raviart69})  to show the existence of solutions of the transient problem~\eqref{TProb}. The existence and uniqueness of the solutions to the semi-discrete problems has been established in Section~\ref{SemiProb}. In the next step, we consider the limit $\Delta t \to 0$. Similar to the regularization technique employed in the last two sections, we derive a priori estimates for the solutions of the semi-discrete problems, which are independent of $\Delta t.$ 

We investigate the semi-discrete problem~\eqref{w1} for homogeneous Dirichlet boundary condition. In this case problem~\eqref{w1} can reads as:   Find $(\m^j, \rho^j)\in W({\rm div}, \Omega) \times L_0^2(\Omega)$, such that
\beq\label{semidiscrete-prob}
\begin{aligned}
a(\m^j, \vv) - b(\vv, \rho^j) = 0, \quad\forall \vv\in V,\\
\intd{\phi\frac{\rho^j -\rho^{j-1}}{\Delta t}, q} + b(\m^j,q) =\intd{f^j, q},  \quad \forall q\in Q. 
\end{aligned}
\eeq
\begin{lemma}\label{boundedness1} For sufficiently small $\Delta t$, there exists constants $\mathcal C_1>0$ and $\mathcal C_2>0$, independent of $\Delta t$ and $J$, such that
\beq\label{indep1}
\norm{\rho^j} \le \mathcal C_1,\, \text{ and } \, \norm{\m^j}+\norm{\m^j}_{L^s}\le \mathcal C_2 \,  \text {  for all } j =1,2,\ldots, J.   
\eeq
\end{lemma}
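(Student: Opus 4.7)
The plan is to derive both estimates by a discrete energy method applied to the mixed system \eqref{semidiscrete-prob}, closing the loop with a discrete Gronwall inequality. Testing the first equation of \eqref{semidiscrete-prob} with $\vv = \m^j$ and the second with $q = \rho^j$ and adding, the $b$-terms cancel and I obtain the step-wise identity
\begin{equation*}
a(\m^j, \m^j) + \bigl(\phi(\rho^j - \rho^{j-1})/\Delta t,\; \rho^j\bigr) = (f^j, \rho^j).
\end{equation*}
To the first term I would apply the monotonicity inequality \eqref{F-mono} with $y' = \m^j$, $y = 0$ to obtain $a(\m^j, \m^j) \ge C_2(\|\m^j\|^2 + \|\m^j\|_{L^s}^s)$. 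To the second I would use the algebraic identity $2a(a-b) = a^2 - b^2 + (a-b)^2$ applied in the $\phi$-weighted inner product, yielding $\bigl(\phi(\rho^j - \rho^{j-1}),\rho^j\bigr) \ge \tfrac12 \bigl[(\phi\rho^j,\rho^j) - (\phi\rho^{j-1},\rho^{j-1})\bigr]$. The right-hand side $(f^j,\rho^j)$ is controlled by Cauchy--Schwarz and Young's inequality, using hypothesis $H_2$ to bound $\|f^j\|$ uniformly in $j$.

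Multiplying the resulting step-wise inequality by $\Delta t$ and summing from $j = 1$ to any $n \le J$ telescopes the $\phi\rho$ contributions and produces
\begin{equation*}
C_2\,\Delta t\sum_{j=1}^n \bigl(\|\m^j\|^2 + \|\m^j\|_{L^s}^s\bigr) + \tfrac{\underline\phi}{2}\|\rho^n\|^2 \le \tfrac{\bar\phi}{2}\|\rho_0\|^2 + CT + C\,\Delta t\sum_{j=1}^n \|\rho^j\|^2,
\end{equation*}
where $C$ depends on $\|f\|_{L^\infty(0,T;L^2)}$, $\underline\phi$, and $\bar\phi$, but not on $\Delta t$ or $J$. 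Dropping the nonnegative $\m$-sum on the left and isolating $\|\rho^n\|^2$, this recursion fits the classical discrete Gronwall lemma, which, provided $\Delta t$ is taken small enough that $C\Delta t/\underline\phi$ stays bounded away from $1$, yields the pointwise-in-$j$ bound $\|\rho^n\| \le \mathcal C_1$ uniformly in $\Delta t$ and $J$. Feeding this bound back into the summed inequality then delivers $\Delta t\sum_j(\|\m^j\|^2 + \|\m^j\|_{L^s}^s) \le C$. To upgrade this cumulative $\m^j$-bound to the pointwise bound asserted in \eqref{indep1}, I would revisit \eqref{eqatstepj} at each fixed $j$ as a stationary Darcy--Forchheimer system with data $(0, \bar f^j)$ and adapt the $\varep$-independent estimate \eqref{bound-mvarep} from Step~2 of the proof of Theorem~\ref{SolofStationaryProb}, using the cancellation between $(\phi\rho^j/\Delta t, \rho^j)$ and $(\phi\rho^{j-1}/\Delta t, \rho^j)$ in the weak form to suppress the $1/\Delta t$ factor that would otherwise enter via $\bar f^j$.

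The main obstacle is ensuring $\Delta t$-independence throughout, and in particular in this final step. A direct attack bounding $b(\m^j, \rho^j) \le \|\nabla\cdot\m^j\|\,\|\rho^j\|$ and then using the second equation of \eqref{semidiscrete-prob} to estimate $\|\nabla\cdot\m^j\| \le \|f^j\| + (\bar\phi/\Delta t)\|\rho^j - \rho^{j-1}\|$ introduces a $1/\Delta t$ prefactor that is fatal. The remedy is to rewrite $b(\m^j,\rho^j) = (f^j,\rho^j) - (\phi(\rho^j-\rho^{j-1})/\Delta t,\rho^j)$ via the weak second equation, so that after the same identity $2a(a-b) = a^2 - b^2 + (a-b)^2$ only controllable differences of telescoping energies remain on the right; Young's inequality with conjugate exponents $(s,s^*)$ matched to the coercivity \eqref{F-mono} then absorbs residual $\|\m^j\|_{L^s}$ contributions into the $\|\m^j\|_{L^s}^s$ term on the left, producing the uniform constant $\mathcal C_2$.
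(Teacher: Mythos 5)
Your derivation of the first half of \eqref{indep1} coincides with the paper's: test \eqref{semidiscrete-prob} with $(\vv,q)=(\m^j,\rho^j)$, add, use \eqref{F-mono} with $y=0$ to get $a(\m^j,\m^j)\ge C_2(\norm{\m^j}^2+\norm{\m^j}_{L^s}^s)$, the weighted identity $2\intd{\phi(\rho^j-\rho^{j-1}),\rho^j}=\intd{\phi,|\rho^j|^2-|\rho^{j-1}|^2+|\rho^j-\rho^{j-1}|^2}$, Young's inequality on $(f^j,\rho^j)$, and the discrete Gronwall lemma for $\Dt$ small. That part is correct and is exactly the paper's argument for $\norm{\rho^j}\le\mathcal C_1$.

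The gap is in the second half of \eqref{indep1}, the \emph{pointwise-in-$j$} bound on $\norm{\m^j}+\norm{\m^j}_{L^s}$. You correctly observe that summing the step inequality only yields the cumulative estimate $\Dt\sum_j(\norm{\m^j}^2+\norm{\m^j}_{L^s}^s)\le C$, which at any single level is weaker by a factor $1/\Dt$. But your proposed upgrade does not close this. Rewriting $b(\m^j,\rho^j)=(f^j,\rho^j)-\intd{\phi(\rho^j-\rho^{j-1})/\Dt,\rho^j}$ is precisely the identity with which you began the energy argument, so it produces nothing new; and at a \emph{fixed} $j$ the difference-quotient term is only bounded below by $\tfrac{1}{2\Dt}\left[\intd{\phi,|\rho^j|^2}-\intd{\phi,|\rho^{j-1}|^2}\right]$, which can be as negative as $-\bar{\phi}\,\mathcal C_1^2/(2\Dt)$. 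The ``telescoping'' you invoke is available only after summation over $j$, not at a single time level, so your route delivers only $a(\m^j,\m^j)\le \norm{f^j}\,\norm{\rho^j}+\bar{\phi}\,\mathcal C_1^2/(2\Dt)$, i.e.\ $\norm{\m^j}_{L^s}^s\le C/\Dt$ rather than a $\Dt$-independent constant. The paper obtains the per-step bound differently: it keeps the term $2C_2\underline{\phi}^{-1}(\norm{\m^j}^2+\norm{\m^j}_{L^s}^s)$ alongside $\norm{\rho^j}^2$ on the left-hand side of the Gronwall recursion and reads the bound off the iterated inequality. Whatever route you take, you must produce a genuinely per-step control of $a(\m^j,\m^j)$ instead of re-deriving the already-exploited energy identity; as written, the final step of your proposal is circular and the claimed constant $\mathcal C_2$ is not established.
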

\begin{proof}
Choosing $(\vv,q) = (\m^j, \rho^j)$ in \eqref{semidiscrete-prob} and adding the resulting equations yields
\beq\label{sum2eqs}
a(\m^j,\m^j)+\intd{\phi\frac{\rho^j -\rho^{j-1}}{\Delta t}, \rho^j} =\intd{ f^j, \rho^j}.
\eeq
Using the identity 
$
2\intd{\phi\frac{\rho^j -\rho^{j-1}}{\Delta t}, \rho^j} =\frac{1}{\Dt}\intd{\phi, |\rho^j|^2 -|\rho^{j-1}|^2+ |\rho^j-\rho^{j-1}|^2},  
$
we get 
\beqs
2\Delta t a(\m^j,\m^j) + \intd{\phi, |\rho^j|^2 -|\rho^{j-1}|^2+ |\rho^j-\rho^{j-1}|^2} = 2\Dt\intd{f^j, \rho^j} . 
\eeqs
Since 
\beqs
2\Delta t a(\m^j,\m^j)\ge 2\Delta tC_2\left( \norm{\m^j}^2+\norm{\m^j}_{L^s}^s\right), \,
\text{
and
}\, 
2\Dt\intd{f^j, \rho^j}
  \le \Dt \left(\norm{f^j}^2 + \norm{\rho^j}^2 \right), 
\eeqs
it follows that
\beqs
2\Delta tC_2\left( \norm{\m^j}^2+\norm{\m^j}_{L^s}^s\right) +\underline{\phi}\left(\norm{\rho^j}^2 -\norm{\rho^{j-1}}^2\right) \le \Dt \left(\norm{f^j}^2+  \norm{\rho^j}^2 \right). 
\eeqs
Using the boundedness of $\phi$ we obtain 
\beqs
\frac{\norm{\rho^j}^2 -\norm{\rho^{j-1}}^2}{\Delta t} -\underline{\phi}^{-1}  \norm{\rho^j}^2 +  2C_2\underline{\phi}^{-1}\left( \norm{\m^j}^2+\norm{\m^j}_{L^s}^s\right)   \le  C\underline{\phi}^{-1}\norm{f^j}^2. 
\eeqs
By discrete Gronwall's inequality 
\begin{multline*}
\norm{\rho^j}^2 + 2C_2\underline{\phi}^{-1}\left( \norm{\m^j}^2+\norm{\m^j}_{L^s}^s\right)\\
\le  C(1- \underline{\phi}^{-1}\Delta t)^{-j}\left( \norm{\rho^0}^2 +  \underline{\phi}^{-1}\Dt \sum_{i=1}^j (1- \underline{\phi}^{-1}\Delta t)^{i-1} \norm{f^i}^2  \right).
\end{multline*} 
Since  $ (1- \ell\Dt)^{-j}\le (1- \ell\Dt)^{-J} \le e^{\frac{J\ell\Dt}{1-\ell\Dt}}=e^{\frac{\ell T}{1-\ell\Dt}}< e^{2\ell T} $ for $\Dt<1/(2\ell)$. It follows from above inequality that
\beq
\norm{\rho^j}^2 + 2C_2\underline{\phi}^{-1}\left( \norm{\m^j}^2+\norm{\m^j}_{L^s}^s\right)  \le  Ce^{c_*T} 
\left( \norm{\rho^0}^2 + T\norm{f}_{L^\infty(0,T;L^2) }^2 \right).
\eeq
 This completes the proof.   
\end{proof}
\begin{lemma}\label{boundedness2} For sufficiently small $\Delta t$, there exists constants $\mathcal C_3>0$  independent of $\Delta t$ and $J$, such that
\beqs
\sum_{i=1}^j \Delta t  \norm{\frac{\rho^j- \rho^{j-1}}{\Delta t} } \le \mathcal C_3,\,  j =1,2,\ldots, J.   
\eeqs
\end{lemma}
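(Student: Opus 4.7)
The plan is to establish the stronger $\ell^2$-in-time estimate $\sum_{i=1}^j \Delta t\,\|\delta_t\rho^i\|^2 \le C$, where $\delta_t\rho^i := (\rho^i-\rho^{i-1})/\Delta t$; the stated $\ell^1$ bound follows at once by Cauchy--Schwarz, since
\beqs
\sum_{i=1}^j \Delta t\,\|\delta_t\rho^i\| \le \sqrt{T}\Big(\sum_{i=1}^j \Delta t\,\|\delta_t\rho^i\|^2\Big)^{1/2}.
\eeqs

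I would begin by testing the discrete continuity equation in \eqref{semidiscrete-prob} with $q=\delta_t\rho^i$ to obtain $\|\delta_t\rho^i\|_\phi^2 + b(\m^i,\delta_t\rho^i) = (f^i,\delta_t\rho^i)$. Multiplying by $\Delta t$, summing $i=1,\ldots,j$, and absorbing the $(f^i,\delta_t\rho^i)$ contribution via Young's inequality (using $\|f\|_{L^\infty(0,T;L^2)}<\infty$) reduces the problem to bounding the accumulated flux $-\sum_{i=1}^j b(\m^i,\delta\rho^i)$ uniformly in $j$ and $\Delta t$. Testing \eqref{eqatstepj} with $\vv=\m^i$ gives $b(\m^i,\rho^i)=a(\m^i,\m^i)$ (with the time dependence of $a$ suppressed), and for $i\ge 2$ the analogue at step $i-1$ gives $b(\m^i,\rho^{i-1})=a(\m^{i-1},\m^i)$; the leftover boundary contribution $b(\m^1,\rho^0) = -\int_\Omega \nabla\rho_0\cdot\m^1\,dx$ is controlled by $\|\nabla\rho_0\|_{L^{s^*}}\|\m^1\|_{L^s}$, finite by hypothesis $H_3$ and Lemma~\ref{boundedness1}. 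A first summation by parts then yields
\beqs
\sum_{i=1}^j b(\m^i,\delta\rho^i) = a(\m^j,\m^j) - b(\m^1,\rho^0) - \sum_{i=1}^{j-1} a(\m^i,\delta\m^{i+1}),
\eeqs
with $\delta\m^{i+1}:=\m^{i+1}-\m^i$.

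The key ingredient is that $F(x,t_i,|\vv|)\vv$ is the pointwise $\vv$-gradient of the convex potential $\varphi^i(x,\vv):=\sum_{k=0}^N \frac{a_k(x,t_i)}{\alpha_k+2}|\vv|^{\alpha_k+2}$. Setting $\Phi^i(\vv):=\int_\Omega \varphi^i(x,\vv)\,dx$, the Bregman (convexity) inequality gives $a(\m^i,\delta\m^{i+1})\le \Phi^i(\m^{i+1})-\Phi^i(\m^i)$. A second summation by parts telescopes the resulting sum into $\Phi^{j-1}(\m^j)-\Phi^1(\m^1) + \sum_{i=2}^{j-1}[\Phi^{i-1}(\m^i)-\Phi^i(\m^i)]$. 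The boundary values of $\Phi$ are bounded by $C(|\Omega|+\|\m^i\|_{L^s}^s)$, uniformly in $i$ by Lemma~\ref{boundedness1}, while each time increment $\Phi^{i-1}(\m^i)-\Phi^i(\m^i)$ is controlled by $CL\Delta t(|\Omega|+\|\m^i\|_{L^s}^s)$ through the Lipschitz-in-time hypothesis on the coefficients $a_k$, so summing these gives a contribution of order $LT$.

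Assembling everything, and using $a(\m^j,\m^j)\ge 0$, I obtain $-\sum_i b(\m^i,\delta\rho^i)\le C$ uniformly in $j$ and $\Delta t$; absorbing the $\|\delta_t\rho^i\|^2$-contribution from Young's inequality on the right into the left finally delivers $\sum_i \Delta t\,\|\delta_t\rho^i\|^2 \le C/\underline\phi$, whence the lemma. The main obstacle I anticipate is orchestrating the two summations by parts so that the nonsymmetric cross term $a(\m^{i-1},\m^i)$ reduces, via Bregman convexity, to convex increments of $\Phi^i$ plus a small Lipschitz-in-time remainder; the leftover boundary terms from both sums must then be absorbed using Lemma~\ref{boundedness1} and the $W_0^{1,s^*}$-regularity of $\rho_0$ so that the final bound is independent of both $J$ and $\Delta t$.
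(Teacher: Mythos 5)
Your proposal is correct, and it arrives at the same estimate the paper actually proves --- the $\ell^2$-in-time bound $\sum_j\Delta t\,\|(\rho^j-\rho^{j-1})/\Delta t\|^2\le C$, from which the stated $\ell^1$ bound follows by Cauchy--Schwarz --- but the bookkeeping is organized differently. The paper substitutes the momentum equation at two consecutive time levels directly into the tested continuity equation, obtaining $b(\m^j,\rho^j-\rho^{j-1})=a(\m^j,\m^j)-a(\m^{j-1},\m^j)$, and then controls $\sum_j\big[a(\m^{j-1},\m^j)-a(\m^j,\m^j)\big]$ by the pointwise Young inequality $a_i^{j-1}|\m^{j-1}|^{\alpha_i}\m^{j-1}\cdot\m^j\le a_i^{j-1}\big(\tfrac{\alpha_i+1}{\alpha_i+2}|\m^{j-1}|^{\alpha_i+2}+\tfrac{1}{\alpha_i+2}|\m^j|^{\alpha_i+2}\big)$ followed by a single telescoping sum; your Bregman inequality for the potential $\Phi^i(\vv)=\sum_k\int_\Omega\frac{a_k^i}{\alpha_k+2}|\vv|^{\alpha_k+2}\,dx$ is exactly that Young inequality in disguise, and your two Abel summations produce the same telescoped boundary terms plus the same Lipschitz-in-time remainder of order $LT$ coming from $\|a_k^{i-1}-a_k^i\|_{L^\infty}\le L\Delta t$. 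The genuine differences are peripheral but real: you handle the source term by Young's inequality and absorption into the left-hand side, which needs only $f\in L^\infty(0,T;L^2)$, whereas the paper Abel-sums $\sum_j(f^j,\rho^j-\rho^{j-1})$ and invokes the Lipschitz continuity of $f$ in time; and your initial-time boundary term is $b(\m^1,\rho^0)=-(\m^1,\nabla\rho_0)$, controlled through $\rho_0\in W_0^{1,s^*}(\Omega)$ and Lemma~\ref{boundedness1}, whereas the paper's telescoping leaves the term $\norm{a_i^0}_{L^\infty}\norm{\m^0}_{L^s}^{\alpha_i+2}$ and thus implicitly requires an $L^s$ bound on $\m^0$ constructed from the initial data. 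Both routes are sound; yours is marginally more economical in its hypotheses on $f$ and cleaner about the $j=1$ endpoint.
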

\begin{proof}
Choosing the test function $q= \rho^j- \rho^{j-1}$ we obtain from the second equation in \eqref{semidiscrete-prob}
\beq\label{eqs0}
\Delta t\norm{ \sqrt{\phi}\frac{\rho^j -\rho^{j-1}}{\Delta t}  }^2 + b(\m^j, \rho^j- \rho^{j-1}) =\intd{f^j ,\rho^j- \rho^{j-1}}. 
\eeq
Taking $\vv=\m^j$  at time step $j$ and $j-1$ from the first equation in \eqref{semidiscrete-prob} we have,  
\beqs 
a(\m^j, \m^j) - b(\m^j, \rho^j) = 0,\quad  \text{ and }  \quad a(\m^{j-1}, \m^j) - b(\m^j, \rho^{j-1}) =0, 
\eeqs
which implies that
\beq\label{eqs2}
a(\m^j, \m^j)- a(\m^{j-1}, \m^j)= b(\m^j, \rho^j- \rho^{j-1}) .
\eeq
 Substituting \eqref{eqs2} into \eqref{eqs0} and summing up for $j=1, \ldots, J$ yields   
\beq\label{a1}
\sum_{j=1}^J \Dt\norm{ \sqrt{\phi}\frac{\rho^j -\rho^{j-1}}{\Delta t}  }^2 =  \sum_{j=1}^J a(\m^{j-1}, \m^j)-a(\m^j, \m^j) + \sum_{j=1}^J \intd{f^j  ,\rho^j- \rho^{j-1}}.
\eeq
We estimate the right hand side term by term.

The second term on the right hand side of \eqref{a1} are bounded by using H\"older's inequality, \eqref{indep1} and  the Lipschitz of $f$ in the time variable.   
\beq\label{a2}
\begin{aligned}
\sum_{j=1}^J  \intd{f^j ,\rho^j- \rho^{j-1}} &= \intd{f^J ,\rho^J}- \intd{f^1,\rho^0}+\sum_{j=1}^{J-1} \intd{f^j-f^{j-1},\rho^j}\\
&\le \norm{f^J}\norm{\rho^J} + \norm{f^1}\norm{\rho^0} +\sum_{j=1}^{J-1}\norm{f^j-f^{j-1}} \norm{\rho^j}\\
&\le C\left(\mathcal C_1+ \norm{\rho^0}\right)\left(\norm{f}_{L^\infty(0,T; L^2)}+LT\right).  
\end{aligned}
\eeq
For the first term the following estimate holds
\begin{multline}\label{eq0}
 \sum_{j=1}^J a(\m^{j-1}, \m^j)-a(\m^j, \m^j)\\
  = \sum_{j=1}^J \int_\Omega \Big(\big(\sum_{i=0}^N a_i^{j-1}|\m^{j-1}|^{\alpha_i}\big)\m^{j-1} \cdot\m^j - \sum_{i=0}^N a_i^j|\m^j|^{\alpha_i+2}\Big)dx. 
\end{multline}
Applying Young's inequality shows
\begin{multline}\label{intsum}
\sum_{i=0}^N a_i^{j-1}|\m^{j-1}|^{\alpha_i}\m^{j-1} \cdot\m^j \le \sum_{i=0}^N a_i^{j-1}\left( \frac{\alpha_i+1}{\alpha_i+2}|\m^{j-1}|^{\alpha_i+2}+\frac{1}{\alpha_i+2} |\m^j|^{\alpha_i+2}\right)\\
=\sum_{i=0}^N \left(\frac{(\alpha_i+1)a_i^{j-1}}{\alpha_i+2} |\m^{j-1}|^{\alpha_i+2}+ \frac{a_i^{j-1} -a_i^j }{\alpha_i+2}|\m^j|^{\alpha_i+2}+ \frac{a_i^j}{\alpha_i+2}  |\m^j|^{\alpha_i+2}\right).
\end{multline}
Substitute \eqref{intsum} into \eqref{eq0} we obtain 
\beq\label{a3}
\begin{aligned}
& \sum_{j=1}^J a(\m^{j-1}, \m^j)-a(\m^j, \m^j)\\
& \le \sum_{j=1}^J \int_\Omega \Big(\sum_{i=0}^N \frac{\alpha_i+1}{\alpha_i+2} (a_i^{j-1}|\m^{j-1}|^{\alpha_i+2} - a_i^j|\m^j|^{\alpha_i+2}) + \sum_{i=0}^N \frac{a_i^{j-1} -a_i^j}{\alpha_i+2}|\m^j|^{\alpha_i+2}\Big) dx\\
&\quad = \sum_{i=0}^N\int_\Omega \Big( \frac{\alpha_i+1}{\alpha_i+2} (a_i^0|\m^0|^{\alpha_i+2} - a_i^J|\m^J|^{\alpha_i+2}) + \sum_{j=1}^J\frac{a_i^{j-1} -a_i^j}{\alpha_i+2} |\m^j|^{\alpha_i+2}\Big) dx\\
 &\le  \sum_{i=0}^N \Big(\frac{\alpha_i+1}{\alpha_i+2} (\norm{a_i^0}_{L^\infty}\norm{\m^0}_{L^s}^{\alpha_i+2} + \norm{a_i^J}_{L^\infty}\norm{\m^J}_{L^s}^{\alpha_i+2}) + \sum_{j=1}^J \frac{1}{\alpha_i+2} \norm {a_i^{j-1} -a_i^j }_{L^\infty}\norm{\m^j}_{L^s}^{\alpha_i+2}\Big)\\
 &\le  \sum_{i=0}^N \Big( 2\bar a \frac{\alpha_i+1}{\alpha_i+2} +\frac{LT}{\alpha_i+2}\Big) \mathcal C_2^{\alpha_i+2}
   \le 2(\bar a  + LT)(N+1)(1+\mathcal C_2)^{\alpha_N+2}.
\end{aligned}
\eeq
It follows from \eqref{a1}--\eqref{a3} that 
\begin{multline*}
\underline \phi \sum_{j=1}^J \Dt\norm{\frac{\rho^j -\rho^{j-1}}{\Dt}}^2 \le \sum_{j=1}^J \Delta t\norm{\sqrt{\phi}\frac{\rho^j -\rho^{j-1}}{\Delta t}}^2 \\
\le C\left(\mathcal C_1+ \norm{\rho^0}\right)\left(\norm{f}_{L^\infty(0,T; L^2)}+LT\right)+2(\bar a  + LT)(N+1)(1+\mathcal C_2)^{\alpha_N+2}\eqdef \mathcal C_3. 
\end{multline*}
This completes the proof. 
\end{proof}

Next, we show that the mixed formulation \eqref{semidiscrete-prob} is equivalent to a variational formulation of the time-discretized parabolic equation.  To this end, we recall the nonlinear mapping $K$ of \eqref{rua}.  For fixed time $t=t_j$, we define the nonlinear mapping $K^j: \Omega \times \R^+ \to \R^+$ (see in \eqref{Kdef}) and its inverse defined by  
\beq\label{eq2j}
F^j(x,z)=a_0(x,t_j)z^{\alpha_0} + a_1(x,t_j)z^{\alpha_1}+\cdots +a_N(x,t_j)z^{\alpha_N},\quad z\ge 0.
\eeq 
\begin{lemma}\label{equivProbs}

i) If $\rho^j \in R(\Omega)=\{r\in L_0^2(\Omega), \nabla r\in (L^{s^*}(\Omega))^d \}$ is a solution of the variational formulation. Find $\rho^j\in R(\Omega)$ such that
\beq\label{VarProb}
\intd{\phi\frac{\rho^j-\rho^{j-1} }{\Delta t},  q} + \intd{ K^j(x,|\nabla \rho^j|)\nabla \rho^j, \nabla q } =\intd{ f^j,q}, \quad \forall q\in R(\Omega) 
\eeq
 then $(-K^j(|\nabla \rho^j|)\nabla \rho^j, \rho^j)  $ is a solution of the mixed formulation~\eqref{semidiscrete-prob}. 

ii) If $(\m^j, \rho^j)\in W(\rm{div},\Omega)\times L_0^2(\Omega)$ is a solution of the mixed formulation~\eqref{semidiscrete-prob} then
$\rho^j$ is a solution of the variational formulation \eqref{VarProb}.  In particular, $\rho^j \in R(\Omega) $.
\end{lemma}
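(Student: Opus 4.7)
The approach uses the pointwise bijection
\[
F^j(x,|\m|)\,\m = -\nabla\rho \;\Longleftrightarrow\; \m = -K^j(x,|\nabla\rho|)\nabla\rho,
\]
which is an immediate consequence of the definition \eqref{Kdef} of $K^j$ via the bijection $s\mapsto sF^j(s)$ on $\R_+$. Both implications then reduce to checking integrability, applying Green's formula, and exploiting the homogeneous Dirichlet boundary data.

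\textbf{Part (i).} Given $\rho^j \in R(\Omega)$ solving \eqref{VarProb}, set $\m^j := -K^j(x,|\nabla\rho^j|)\nabla\rho^j$. First I would establish $\m^j \in (L^s(\Omega))^d$: since $|\m^j| = s^j(|\nabla\rho^j|)$ and $F^j(s)\ge a_N s^{\alpha_N}$, the equation $s^j F^j(s^j)=|\nabla\rho^j|$ gives $|\m^j|^{\alpha_N+1}\le C|\nabla\rho^j|$, i.e.\ $|\m^j|^s\le C(1+|\nabla\rho^j|^{s^*})$, which combined with $\nabla\rho^j\in (L^{s^*}(\Omega))^d$ yields $\m^j\in(L^s(\Omega))^d$. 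Testing \eqref{VarProb} against $q\in C_c^\infty(\Omega)\subset R(\Omega)$ then produces, distributionally, $\nabla\cdot\m^j = f^j - \phi(\rho^j-\rho^{j-1})/\Delta t \in L^2(\Omega)$, so $\m^j\in W(\rm{div};\Omega)$. The first mixed equation follows from the bridge identity together with Green's formula and the homogeneous trace of $\rho^j$: for every $\vv\in V$,
\[
a(\m^j,\vv)-b(\vv,\rho^j) = -\int_\Omega \nabla\rho^j\cdot\vv\,dx - \int_\Omega \rho^j\,\nabla\cdot\vv\,dx = -\int_{\partial\Omega}\rho^j\,\vv\cdot\nu\,d\sigma = 0,
\]
while the second mixed equation is precisely the distributional identity for $\nabla\cdot\m^j$ paired against an arbitrary $q\in L^2(\Omega)$.

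\textbf{Part (ii).} Given $(\m^j,\rho^j)$ solving \eqref{semidiscrete-prob}, restricting the first mixed equation to $\vv\in C_c^\infty(\Omega)^d$ gives $\nabla\rho^j = -F^j(x,|\m^j|)\m^j$ in the sense of distributions. Using \eqref{F-cont} with one argument equal to zero yields $|F^j(|\m^j|)\m^j|\le C(|\m^j|+|\m^j|^{s-1})\in L^{s^*}(\Omega)$ since $\m^j\in L^s$, so $\nabla\rho^j\in (L^{s^*}(\Omega))^d$ and therefore $\rho^j\in R(\Omega)$. To show that $\rho^j$ has zero trace, I would take $\vv\in V$ with arbitrary normal trace on $\partial\Omega$ and integrate $b(\vv,\rho^j)$ by parts in the first equation, obtaining
\[
0 = \int_\Omega (F^j(|\m^j|)\m^j + \nabla\rho^j)\cdot\vv\,dx - \int_{\partial\Omega}\rho^j\,\vv\cdot\nu\,d\sigma = -\int_{\partial\Omega}\rho^j\,\vv\cdot\nu\,d\sigma,
\]
and, since the normal traces $\vv\cdot\nu$ span a dense subspace of the boundary trace space, this forces $\rho^j|_{\partial\Omega}=0$. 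Inverting the bridge identity yields $\m^j=-K^j(|\nabla\rho^j|)\nabla\rho^j$; finally, integrating by parts in the second mixed equation against $q\in R(\Omega)$ (which has zero trace) reproduces \eqref{VarProb}.

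\textbf{Main obstacle.} The two delicate steps are (a) the $L^s$ bound on $\m^j$ in (i), which depends on the precise growth of $K^j$ derived from the monotone bijection $s\mapsto sF^j(s)$; and (b) the trace argument in (ii), where one must verify that the normal traces of elements of $V$ are sufficiently rich to force the boundary values of $\rho^j$ to vanish. The remaining steps are a routine combination of Green's formula with the pointwise correspondence between $\m$ and $\nabla\rho$.
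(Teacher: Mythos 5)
Your proposal is correct and follows essentially the same route as the paper: the pointwise inversion between $F^j(x,|\m|)\m=-\nabla\rho$ and $\m=-K^j(x,|\nabla\rho|)\nabla\rho$, Green's formula against test functions in $\mathcal D(\Omega)$ (resp.\ $(\mathcal D(\overline\Omega))^d$), a density argument, and the normal-trace argument to recover the homogeneous boundary condition. The only difference is that you explicitly verify the integrability $\m^j\in (L^s(\Omega))^d$ from the growth of $s\mapsto sF^j(s)$, a step the paper leaves implicit; this is a welcome addition rather than a deviation.
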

\begin{proof} i) Let $\rho^j$ be a solution of \eqref{VarProb}. We define $\m^j=-K^j(x,|\nabla \rho^j|)\nabla \rho^j$. Then Green's formula yields
\beqs
\intd{F^j(x,|\m^j|)\m^j , \vv }  =-\intd{\nabla \rho^j, \vv} =\intd{\rho^j, \nabla\cdot \vv},  \quad \forall \vv\in V.
\eeqs
This is the first equation in \eqref{semidiscrete-prob}. To derive the second equation in~\eqref{semidiscrete-prob}, we consider \eqref{VarProb} for
$q\in \mathcal D(\Omega)\subset R(\Omega)$ 
\beqs
\intd{\phi\frac{\rho^j-\rho^{j-1}}{\Delta t}, q} -(\m^j, \nabla q)=  \intd{f^j, q }, 
\eeqs
and then apply Green's formula we obtain  
\beqs
\intd{\phi\frac{\rho^j-\rho^{j-1}}{\Delta t}, q}+ \intd{ \nabla \cdot \m^j, q } =\intd{f^j, q}.   
\eeqs

Because  $\mathcal D(\Omega)$ is densely embedded into $L^2(\Omega)$, the second equation in \eqref{semidiscrete-prob} follows.

ii) Let $(\m^j,\rho^j)$ be the solution of \eqref{semidiscrete-prob}. Applying Green's formula implies
\beqs
\intd{ F^j(x,|\m^j|)\m^j,  \vv}= \intd{ \nabla \cdot \vv , \rho^j } = \intd{ -\nabla \rho^j, \vv },\quad \forall \vv\in (\mathcal D(\Omega))^d.   
\eeqs
Thus in the sense of distributions it holds $\nabla \rho^j= -F^j(x,|\m^j|)\m^j \in (L^{s*}(\Omega))^d $. Consequently, $\rho^j\in \{r\in L^2(\Omega), \nabla r\in (L^{s^*}(\Omega))^d \}$  and $\m^j=-K^j(|\nabla \rho^j|)\nabla\rho^j$. To prove that $\rho^j$ fulfills \eqref{VarProb}, we consider $q\in R(\Omega)\subset L^2(\Omega) $ in the first equation of \eqref{semidiscrete-prob}. Using integration by parts, we have  
 \beqs
 \begin{split}
 \intd{f^j,q}&= \intd{\phi\frac{\rho^j -\rho^{j-1}}{\Delta t}, q } + \intd{\nabla\cdot \m^j, q } \\
 &= \intd{\phi\frac{\rho^j -\rho^{j-1}}{\Delta t}, q} - \intd{ \nabla\cdot ( K^j(x,|\nabla \rho^j|)\nabla \rho^j , q}\\
 &= \intd{\phi\frac{\rho^j -\rho^{j-1}}{\Delta t}, q } + \intd{ K^j(x, |\nabla \rho^j|)\nabla \rho^j , \nabla q }.
 \end{split}
 \eeqs
 Finally, we consider again the first equation of \eqref{semidiscrete-prob} for $\vv\in(\mathcal D(\bar \Omega))^d$. Using integration by parts, we obtain
 \beqs
 0=-\intd{F^j(x,|\m^j|)\m^j ,\vv}+ \intd{\nabla \cdot \vv ,\rho^j } = \intd{ \nabla\rho^j ,\vv}+ \intd{\nabla \cdot \vv, \rho^j } = \int_{\partial\Omega} \gamma_0\rho^j \vv\cdot\nu d\sigma. 
 \eeqs
 Consequently, $\gamma_0\rho^j=0$ in $W^{1/2, 2}(\partial\Omega)$. i.e. $\rho^j\in R(\Omega)$.  
  \end{proof}
  
Using this equivalence, we obtain a bound for $\rho^j$ in the norm of $R(\Omega)$ defined by $\norm{r}_R = \norm{r}+ \norm{\nabla r}_{L^{s^*}}$.
\begin{lemma}\label{boundedness3} For sufficiently small $\Delta t$, there exist constants $\mathscr C_1,\mathscr C_2$ and $\mathscr C_3$, all independent of $\Delta t$ and $J$, such that
\begin{align}
\label{indep2} \norm{\rho^j}_R&\le \mathscr C_1,\, \text{for all } j=0,1,2,\ldots, J; \\ 
\label{indep3} \norm{\frac{\rho^j -\rho^{j-1} }{\Delta t} }_{R'} &\le \mathscr C_2, \, \text{for all } j=0,1,2,\ldots, J;\\
\label{indep4} \norm{\nabla \cdot \m^j}_{R'}&\le \mathscr C_3,\, \text{for all } j=0,1,2,\ldots, J.
\end{align}

\end{lemma}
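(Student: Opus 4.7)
The plan is to derive the three estimates in turn, using the $L^s$-bound on $\m^j$ from Lemma~\ref{boundedness1}, the equivalence Lemma~\ref{equivProbs}, and Green's formula, with no need to revisit the energy inequality.

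For \eqref{indep2}, the $L^2$-bound $\norm{\rho^j}\le\mathcal C_1$ is already supplied by Lemma~\ref{boundedness1}. For the gradient, I would invoke the pointwise identity $\nabla\rho^j=-F^j(x,|\m^j|)\m^j$ provided by Lemma~\ref{equivProbs}(ii). The polynomial growth estimate $F^j(x,z)z\le C(1+z^{\alpha_N+1})=C(1+z^{s-1})$, together with the algebraic identity $(s-1)s^*=s$, then yields
\[
\int_\Omega|\nabla\rho^j|^{s^*}dx\le C\int_\Omega(1+|\m^j|^s)dx\le C(1+\mathcal C_2^s),
\]
so $\norm{\nabla\rho^j}_{L^{s^*}}$ is uniformly bounded in $j$ and $\Dt$, which combined with the $L^2$-bound gives \eqref{indep2}.

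For \eqref{indep3}, I would test the second equation of \eqref{semidiscrete-prob} against an arbitrary $q\in R(\Omega)$. Since such $q$ have vanishing trace and $\m^j\in W(\mathrm{div};\Omega)\cap(L^s(\Omega))^d$, Green's formula converts $b(\m^j,q)$ into $-\intd{\m^j,\nabla q}$, whence
\[
\intd{\phi\,\frac{\rho^j-\rho^{j-1}}{\Dt},q}=\intd{f^j,q}+\intd{\m^j,\nabla q}.
\]
H\"older's inequality bounds the right-hand side by $\norm{f^j}\norm{q}+\norm{\m^j}_{L^s}\norm{\nabla q}_{L^{s^*}}\le C(\mathcal C_2+\norm{f}_{L^\infty(0,T;L^2)})\norm{q}_R$, so the weighted difference quotient is uniformly bounded in $R'$.

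Estimate \eqref{indep4} then follows most cleanly from the same integration by parts: for every $q\in R(\Omega)$,
\[
\intd{\nablad\m^j,q}=-\intd{\m^j,\nabla q},
\]
so $\norm{\nablad\m^j}_{R'}\le\norm{\m^j}_{L^s}\le\mathcal C_2$. Alternatively one can solve for $\nablad\m^j$ from the continuity equation and combine \eqref{indep3} with the $L^2$-bound on $f^j$. The delicate point, which I expect to be the main technical obstacle, lies in \eqref{indep3}: because the porosity $\phi$ is assumed only bounded (not Lipschitz), the substitution $q\mapsto q/\phi$ need not preserve $R(\Omega)$, so to remove the weight one must either interpret the $R'$-norm through the weighted pairing $\intd{\phi\,\cdot,\cdot}$, or explicitly carry the ratio $\bar\phi/\underline\phi$ through the constants, exploiting only the uniform positivity and boundedness of $\phi$.
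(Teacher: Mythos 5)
Your proposal is correct and follows essentially the same route as the paper: \eqref{indep2} comes from the identity $\nabla\rho^j=-F^j(x,|\m^j|)\m^j$ of Lemma~\ref{equivProbs} together with the polynomial growth of $F$ and the $L^s$-bound of Lemma~\ref{boundedness1}, and \eqref{indep3} from testing the equation against $q\in R(\Omega)$ exactly as in the paper's display \eqref{diffquotion}. Two small differences are worth recording. For \eqref{indep4} the paper solves for $\nabla\cdot\m^j$ from the continuity equation, which produces the $L^2$-norm of the difference quotient in the intermediate bound (a quantity that is \emph{not} uniformly bounded pointwise in $j$ by the earlier lemmas) and therefore has to be closed by re-invoking \eqref{indep3}; your direct estimate $|\intd{\nabla\cdot\m^j,q}|=|\intd{\m^j,\nabla q}|\le\norm{\m^j}_{L^s}\norm{q}_R$ is cleaner and bypasses that detour entirely. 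Second, your remark about the weight $\phi$ identifies a genuine subtlety that the paper dismisses with the phrase ``the boundedness of $\phi$'': the bound \eqref{diffquotion} controls $\phi\,(\rho^j-\rho^{j-1})/\Dt$ as an element of $R'(\Omega)$, and removing the weight by substituting $q/\phi$ requires $q/\phi\in R(\Omega)$, i.e.\ some regularity of $\phi$ beyond hypothesis $H_1$; either of the two fixes you propose (interpreting the $R'$-norm through the weighted pairing, or carrying the ratio $\bar\phi/\underline\phi$ through under an added smoothness assumption on $\phi$) is what is implicitly needed to make the paper's one-line conclusion rigorous.
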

\begin{proof}
By H\"older inequality and \eqref{indep1} imply 
\begin{align*}
\norm{\nabla \rho^j}_{L^{s^*}}^{s^*} &=\norm{F^j(x,|\m^j|)\m^j}_{L^{s^*}}^{s^*} \le C \sum_{i=0}^N\int_\Omega |\m^j|^{(\alpha_i+1)s^*} dx\\
&\le C\sum_{i=0}^N\norm{\m^j}_{L^s}^{(\alpha_i+1)s^*} \le  C\sum_{i=0}^N\mathcal C_2^{(\alpha_i+1)s^*} :=\mathscr C_1^{s^*}.
\end{align*} 
This and \eqref{indep1} show \eqref{indep2}. 

By means of \eqref{VarProb}, we have for $q\in R(\Omega)$,
\beq\label{diffquotion}
\begin{split}
\left |\intd{\phi\frac{\rho^j-\rho^{j-1}}{\Delta t},q } \right |&=\left| \intd{ f^j, q} -\intd{ K^j(|\nabla \rho^j|)\nabla \rho^j,q } \right|=\left| \intd{f^j,q} + \intd{\m^j,\nabla q}\right|\\
  &\le \norm{f^j}\norm{q} + \norm{\m^j}_{L^s}\norm{\nabla q}_{L^{s^*}}\le \left(\norm{f^j} + \norm{\m^j}_{L^s}\right)\norm{q}_R.
\end{split}
\eeq
Then \eqref{indep3} follows \eqref{diffquotion} and the boundedness of the function $\phi$. 

From the second equation of \eqref{semidiscrete-prob} yields
\beq\label{nablaM}
\begin{split}
\left|\intd{\nabla\cdot \m^j, q }\right| 
 &= \left |\intd{ f^j, q }- \intd{\phi\frac{\rho^j-\rho^{j-1}}{\Delta t}, q } \right | \le \left(\norm{f^j}+ \bar \phi \norm{\frac{\rho^j-\rho^{j-1}}{\Delta t} } \right) \norm{q}\\
 &\le \left(\norm{f^j}+ \bar \phi \norm{\frac{\rho^j-\rho^{j-1}}{\Delta t} }   \right) \norm{q}_R. 
\end{split}
\eeq
Hence inequality \eqref{indep4} follows from combining \eqref{nablaM} and \eqref{indep3}.   
\end{proof}
\subsection{Solvability of the continuous problem} 
Due to the existence of unique solutions to the semi-discrete mixed formulation \eqref{semidiscrete-prob} we obtain for every $J\in \N$ a $J+1$-tuple of solutions $(\m^j, \rho^j)_{j=0,\ldots,J}\in ( W(\rm{div}, \Omega) \times L_0^2(\Omega) ) ^{J+1}. $ We denote these $J + 1$-tuples with $\m_{\Dt}:=(\m^j)_{j=0,\ldots, J}\in  ( W(\rm{div}, \Omega) ) ^{J+1} $ and $ \rho_{\Dt}:= (\rho^j)_{j=0,\ldots, J}\in  ( L_0^2(\Omega) ) ^{J+1}. $ We define step function by 
\beqs
\pi\rho_{\Dt}(t)=\begin{cases}
\rho^0 & \text { if } t=0 \\
 \rho^j & \text { if } t_{j-1}<t \le  t_j, j=1,\ldots,J
 \end{cases} 
 \in L^\infty(0,T; R(\Omega))
\eeqs
and piecewise linear (in time) functions 
$\Pi\rho_{\Dt}(t)=\frac{\rho^j-\rho^{j-1}}{\Dt} (t-t_j) + \rho^j, t_{j-1}\le t\le t_j, j=1, \ldots,J.$
The time derivative of $\Pi \rho_{\Dt}(t)$ is a piecewise constant step function with values
 \beqs
  \frac{\partial \Pi\rho_{\Dt}}{\partial t} = \frac{\rho^j-\rho^{j-1}}{\Delta t} \quad  \text{ if }  t_{j-1}< t< t_j, j=1,\ldots, J. 
 \eeqs
In addition, we use piecewise constant approximations ${a_i}_{\Dt}$ and $f_{\Dt}$ of the coefficient functions $a_i$ and $f$, and piecewise constant operators $F_{\Dt}$ and $K_{\Dt}$. According to Lemmas~\ref{boundedness1} and \ref{boundedness3} the following bounds hold for sufficiently small $\Delta t$.
\begin{align*}
&\norm{\pi\rho_{\Dt}}_{L^\infty(0,T; R(\Omega))}\le \mathscr C_1, && \norm{\frac{\partial \Pi\rho_{\Dt}}{\partial t} }_{L^\infty(0,T;R'(\Omega))}\le \mathscr C_2,\\
&\norm{\pi\m_{\Dt}}_{L^\infty(0,T; (L^2(\Omega))^d)}\le \mathcal C_2, &&\norm{\pi\nabla\cdot\m_{\Dt}}_{L^\infty(0,T;R'(\Omega))}\le \mathscr C_3,\\
&\norm{ F(x,|\pi\m_{\Dt}|)\pi\m_{\Dt}}_{L^\infty(0,T; (L^{s^*}(\Omega))^d)}\le \mathscr C_1, &&\norm{\rho^J}\le \mathcal C_1.
\end{align*}

Thus the exist a subsequences, again indexed by $\Dt$, that converge in corresponding weak*-topology; in detail 
 \begin{align}
\label{b1} &\pi\rho_{\Dt} \stackrel{*}{\rightharpoonup} \rho \, \text { in }\, L^\infty(0,T; R(\Omega)), 
&& \frac{\partial \Pi\rho_{\Dt}}{\partial t} \stackrel{ *}{\rightharpoonup} \rho' \, \text { in }\, L^\infty(0,T;R'(\Omega))\\
\label{b2} & \pi\m_{\Dt}\stackrel{ *}{\rightharpoonup}  \m \, \text { in } \, L^\infty(0,T; (L^2(\Omega))^d),
&&\pi\nabla\cdot\m_{\Dt} 	\stackrel{ *}{\rightharpoonup}  \bar\m \, \text { in } \, L^\infty(0,T;R'(\Omega))\\
\label{b3} &F(x,|\pi\m_{\Dt}|)\pi\m_{\Dt}	\stackrel{ *}{\rightharpoonup}  \hat F\, \text { in }\, L^\infty(0,T; (L^{s^*}(\Omega))^d), 
&&\rho^J\stackrel{ *}{\rightharpoonup}  \rho_T \,\text { in }\quad L^2(\Omega).
\end{align}
\begin{lemma}\label{identeq0}
i) The identity $\rho'=\partial \rho/\partial t$ hold in the sense of distribution from $(0,T)$ to $L^2(\Omega)$. That is  for all $\varphi\in \mathcal D ((0,T))$ 
\beqs
\int_0^T \rho'(t)\varphi (t) dt = - \int_0^T \rho(t)\varphi'(t) dt \, \text{ in } L^2(\Omega).  
\eeqs
ii) The identity $\bar \m= \nabla\cdot \m$ hold in the sense of distribution on $\Omega$ hold for almost everywhere in   $(0,T)$. That is  for all $\psi\in \mathcal D (\Omega)$ 
\beq\label{coincide2}
\intd{\bar \m, \psi} = - \intd{\m, \nabla \psi }, \, \, \text{ a.e in~}(0,T).  
\eeq
iii) The identity $\hat F = -\nabla \rho$ hold in $L^\infty (0,T; (L^{s^*}(\Omega))^d ) $. That is  for all $\vv\in L^1(0,T; (L^{s}(\Omega))^d )$ 
\beq\label{coincide3}
\int_0^T \intd{\hat F, \vv} dt = - \int_0^T\intd{\nabla \rho,  \vv}  dt.  
\eeq
\end{lemma}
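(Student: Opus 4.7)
My general plan for all three parts is to start from an identity that holds at the semi-discrete level, translate it to the piecewise-in-time extensions, and then pass to the limit $\Dt\to 0$ using the weak-$*$ convergences \eqref{b1}--\eqref{b3}. At each step I must produce a test function belonging to the correct predual space; once that bookkeeping is done, the arguments reduce to standard density or duality.

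For (i), I observe that $\Pi\rho_{\Dt}$ is piecewise linear in time with values in $L^2(\Omega)$, so the elementary integration by parts gives, for every $\varphi\in\mathcal{D}((0,T))$,
\begin{equation*}
\int_0^T \frac{\partial \Pi\rho_{\Dt}}{\partial t}(t)\,\varphi(t)\, dt \;=\; -\int_0^T \Pi\rho_{\Dt}(t)\,\varphi'(t)\, dt.
\end{equation*}
By \eqref{b1} the left-hand side tends weakly-$*$ in $R'(\Omega)$ to $\int_0^T \rho'(t)\varphi(t)\,dt$. For the right-hand side I need $\Pi\rho_{\Dt}$ and $\pi\rho_{\Dt}$ to share the same weak-$*$ limit, and this is the key technical ingredient: the pointwise bound $\norm{\Pi\rho_{\Dt}(t)-\pi\rho_{\Dt}(t)}_{R'}\le\Dt\,\mathscr C_2$, immediate from \eqref{indep3}, forces $\Pi\rho_{\Dt}-\pi\rho_{\Dt}\to 0$ strongly in $L^\infty(0,T;R'(\Omega))$. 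Combined with \eqref{b1} and the continuous embedding $R(\Omega)\hookrightarrow L^2(\Omega)\hookrightarrow R'(\Omega)$, this yields $\Pi\rho_{\Dt}\stackrel{*}{\rightharpoonup}\rho$ in the same topology, and the right-hand side passes to $-\int_0^T\rho(t)\varphi'(t)\,dt$. The limiting identity a priori lives in $R'(\Omega)$, but since its right side lies in $L^2(\Omega)$ so does its left side, which gives the statement.

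For (ii), Green's formula at each time step with $\psi\in\mathcal{D}(\Omega)$ (no boundary term, since $\psi$ is compactly supported) gives $\intd{\nablad\m^j,\psi}=-\intd{\m^j,\nabla\psi}$. Lifting to the piecewise constant extensions, multiplying by $\varphi\in\mathcal{D}((0,T))$, integrating, and passing to the limit via \eqref{b2} with test functions $\varphi\psi\in L^1(0,T;R(\Omega))$ and $\varphi\nabla\psi\in L^1(0,T;(L^2(\Omega))^d)$ produces the integrated identity. Since this holds for every $\varphi$, the integrands agree a.e.\ in $t$ for each fixed $\psi$; separability of $\mathcal{D}(\Omega)$ lets me take a countable dense family and extract a single null set outside which \eqref{coincide2} holds for all $\psi$. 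For (iii), I restrict the first equation of \eqref{semidiscrete-prob} to $\vv\in(\mathcal{D}(\Omega))^d$, multiply by $\varphi\in\mathcal{D}((0,T))$, and pass to the limit: \eqref{b3} handles the left-hand side, and \eqref{b1} (through the embedding $R(\Omega)\hookrightarrow L^2(\Omega)$) handles the right-hand side, giving
\begin{equation*}
\int_0^T \varphi(t)\,\intd{\hat F,\vv}\,dt \;=\; \int_0^T \varphi(t)\,\intd{\rho,\nablad\vv}\,dt.
\end{equation*}
Because $\rho(t)\in R(\Omega)$ has zero trace a.e.\ in $t$ and $\vv$ has compact support, integration by parts rewrites the right-hand side as $-\int_0^T\varphi(t)\,\intd{\nabla\rho,\vv}\,dt$. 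Density of the tensor products $\varphi(t)\vv(x)$ in $L^1(0,T;(L^s(\Omega))^d)$ extends the identity to arbitrary test fields, and duality with this separable reflexive predual yields $\hat F=-\nabla\rho$ in $L^\infty(0,T;(L^{s^*}(\Omega))^d)$.

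The main obstacle is the bookkeeping in (i): the weak-$*$ limit of $\Pi\rho_{\Dt}$ is not supplied directly by \eqref{b1}--\eqref{b3}, so I must transfer it from $\pi\rho_{\Dt}$ via the $O(\Dt)$ bridging estimate in $L^\infty(0,T;R'(\Omega))$, and then argue that the limit identity, which naturally lives in $R'(\Omega)$, in fact takes values in $L^2(\Omega)$. Parts (ii) and (iii) are then variations on a common template: apply an integration-by-parts identity at the discrete level, pass to the limit using the right weak-$*$ convergence, and use separability or density to strengthen ``for every test function'' to a pointwise or dual statement.
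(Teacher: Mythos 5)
Your proposal is correct and follows essentially the same route as the paper: discrete integration by parts, passage to the limit via the weak-$*$ convergences \eqref{b1}--\eqref{b3}, and a density/duality argument at the end. The one place you go beyond the paper is the explicit bridging estimate $\norm{\Pi\rho_{\Dt}-\pi\rho_{\Dt}}_{L^\infty(0,T;R'(\Omega))}\le \Dt\,\mathscr C_2$ justifying the replacement of $\Pi\rho_{\Dt}$ by $\pi\rho_{\Dt}$ in part (i) --- a step the paper performs without comment --- and in part (iii) you route through compactly supported test fields, the vanishing trace of $\rho$, and density, where the paper directly invokes $F^j(x,|\m^j|)\m^j=-\nabla\rho^j$ together with the weak-$*$ convergence $\pi\nabla\rho_{\Dt}\stackrel{*}{\rightharpoonup}\nabla\rho$; the two arguments are equivalent.
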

\begin{proof}
i) 
Let $\varphi\in \mathcal D (0,T)$ then  
\begin{align*}
\int_0^T \rho'(t) \varphi (t) dt &= \lim_{\Dt\to 0} \int_0^T \frac{\partial\Pi\rho_{\Dt}}{\Dt}  \varphi (t) dt
 = \lim_{\Dt\to 0} \sum_{i=j}^J \int_{t_{j-1}}^{t_j}   \frac{\partial \Pi\rho_{\Dt}}{\partial t} \varphi (t) dt\\
&=\lim_{\Dt\to 0} \sum_{j=1}^J \Pi\rho_{\Dt}(t_j) \varphi (t_j) - \Pi\rho_{\Dt}(t_{j-1})  \varphi(t_{j-1})   - \int_{t_{j-1}}^{t_j}  \Pi\rho_{\Dt}  \varphi' (t) dt\\
&=-\lim_{\Dt\to 0}\sum_{j=1}^J \int_{t_{j-1}}^{t_j}  \Pi\rho_{\Dt}  \varphi' (t) dt= -\lim_{\Dt\to 0}\int_0^T  \Pi\rho_{\Dt}  \varphi' (t) dt\\
&=-\lim_{\Dt\to 0}\int_0^T  \pi\rho_{\Dt}  \varphi' (t) dt=-\int_0^T \rho \varphi' (t) dt.
\end{align*}

ii) 
Let $\psi\in \mathcal D(\Omega)$ and $\varphi\in \mathcal D(0,T)$ then
\begin{align*}
 \int_0^T \intd{\bar \m, \psi} \varphi (t) dt &= \lim_{\Dt\to 0}\int_0^T \intd{\pi\nabla\cdot\m_{\Dt},\psi}\varphi(t) dt\\
 &  = - \lim_{\Dt\to 0}\int_0^T \intd{ \pi\m_{\Dt},\nabla \psi}\varphi(t) dt= - \int_0^T (\m,\nabla\psi) \varphi(t) dt.
\end{align*}

iii) 
For all $\vv\in L^1(0,T,(L^s(\Omega))^d)$, 
\begin{align*}
\int_0^T\intd{ \hat F, \vv}dt &=\lim_{\Dt\to 0}\int_0^T \intd{  F(x,|\pi\m_{\Dt}|)\pi\m_{\Dt}, \vv}dt\\
& = -\lim_{\Dt\to 0}\int_0^T \intd{  \pi\nabla \rho_{\Dt}, \vv }dt =- \int_0^T\intd{\nabla \rho, \vv}dt.
\end{align*}
\end{proof}
\begin{lemma}\label{identeq1}
The following identity holds in $L^\infty(0,T; L_0^2(\Omega))$
\beq\label{firsteq}
\phi\frac{\partial \rho}{\partial t} +\nabla\cdot \m = f. 
\eeq
Furthermore $\rho(x,0)=\rho_0(x)$, and $\rho(x,T)=\rho_T$.
\end{lemma}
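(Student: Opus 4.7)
The plan is to start from the second equation of the semi-discrete mixed formulation \eqref{semidiscrete-prob} rewritten in piecewise terms: for every $q\in L^2(\Omega)$,
\[
\intd{\phi\frac{\partial \Pi\rho_{\Dt}}{\partial t},q} + \intd{\pi\nabla\cdot\m_{\Dt},q} = \intd{\pi f_{\Dt},q}\quad\text{a.e. in }(0,T),
\]
then pair this identity with a scalar test function $\varphi\in\mathcal D(0,T)$, integrate in time, and send $\Dt\to 0$. For $q\in R(\Omega)\subset L^2(\Omega)$, the weak* convergences \eqref{b1} and \eqref{b2}, combined with the strong convergence $\pi f_{\Dt}\to f$ in $L^\infty(0,T;L^2(\Omega))$ (which follows from the Lipschitz-in-time assumption on $f$), produce a limit identity against $\varphi$. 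Lemma~\ref{identeq0}(i) lets me replace the weak* limit of $\partial_t\Pi\rho_{\Dt}$ with $\partial\rho/\partial t$ in $\mathcal D'(0,T;L^2(\Omega))$, and Lemma~\ref{identeq0}(ii) lets me replace the weak* limit of $\pi\nabla\cdot\m_{\Dt}$ with $\nabla\cdot\m$. The outcome is
\[
\intd{\phi\frac{\partial\rho}{\partial t},q} + \intd{\nabla\cdot\m,q} = \intd{f,q}
\]
in $\mathcal D'(0,T)$ for every $q\in R(\Omega)$. Since $R(\Omega)$ is dense in $L^2(\Omega)$, the identity extends to all $q\in L^2(\Omega)$, so \eqref{firsteq} holds. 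Having established the PDE, the regularity $\phi\,\partial\rho/\partial t=f-\nabla\cdot\m\in L^\infty(0,T;L^2(\Omega))$ is a posteriori consequence.

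For the initial condition $\rho(\cdot,0)=\rho_0$, fix $q\in R(\Omega)$ and $\varphi\in C^1([0,T])$ with $\varphi(T)=0$. Integration by parts in time applied to $\int_0^T\intd{\phi\,\partial_t\Pi\rho_{\Dt},q}\varphi\,dt$ produces the boundary term $-\intd{\phi q,\Pi\rho_{\Dt}(0)}\varphi(0)=-\intd{\phi q,\rho_0}\varphi(0)$, because by construction $\Pi\rho_{\Dt}(0)=\rho^0=\rho_0$. The same integration by parts performed on the limit equation yields $-\intd{\phi q,\rho(0)}\varphi(0)$, and comparing for arbitrary $q$ and arbitrary value of $\varphi(0)$ forces $\rho(\cdot,0)=\rho_0$. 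The terminal value $\rho(\cdot,T)=\rho_T$ is obtained by the analogous computation with $\varphi$ satisfying $\varphi(0)=0$ and $\varphi(T)=1$: the discrete boundary term $\intd{\phi q,\rho^J}\varphi(T)$ converges, via the weak* limit $\rho^J\stackrel{*}{\rightharpoonup}\rho_T$ in $L^2(\Omega)$ recorded in \eqref{b3}, to $\intd{\phi q,\rho_T}\varphi(T)$, while the continuous identity yields $\intd{\phi q,\rho(T)}\varphi(T)$.

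The main obstacle is to give meaning to the pointwise traces $\rho(\cdot,0)$ and $\rho(\cdot,T)$, which requires upgrading the regularity of the weak* limit to $\rho\in C([0,T];L^2(\Omega))$. This will be done by combining $\rho\in L^\infty(0,T;R(\Omega))$ with the bound $\partial\rho/\partial t\in L^\infty(0,T;L^2(\Omega))$ established above, via a standard Sobolev-in-time embedding; alternatively, one can sidestep continuity by directly comparing the two integration-by-parts formulas and invoking the Lions-type lemma that identifies the trace of a distribution whose derivative is integrable. Once this is done, everything else reduces to routine weak* limit passages, whose justification is built into the a priori bounds collected just before \eqref{b1}.
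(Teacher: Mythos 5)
Your proposal is correct and follows essentially the same route as the paper: pass to the limit in the time-integrated second semi-discrete equation to get \eqref{firsteq}, then compare the discrete summation-by-parts identity (whose boundary terms carry $\rho^0$ and $\rho^J$) with the integration-by-parts of the limit equation to identify $\rho(0)=\rho_0$ and $\rho(T)=\rho_T$; your extra care about the trace regularity of $\rho$ is a point the paper glosses over. One small correction: $\nabla\cdot\m$ is only controlled in $L^\infty(0,T;R'(\Omega))$ by the a priori bounds, so the a posteriori claim $\partial\rho/\partial t\in L^\infty(0,T;L^2(\Omega))$ is not justified as stated; instead use the bound of Lemma~\ref{boundedness2}, which gives $\partial_t\Pi\rho_{\Dt}$ bounded in $L^2(0,T;L^2(\Omega))$ and hence $\rho\in H^1(0,T;L^2(\Omega))\hookrightarrow C([0,T];L^2(\Omega))$.
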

\begin{proof}
For $\varphi\in \mathcal D ([0,T])$ we defined the step function $\varphi_{\Dt}$ by 
\beqs
\varphi_{\Dt} =\begin {cases}  
                        \varphi(t_{j-1}) &\text { if } t_{j-1}\le t< t_j, j=1,\ldots, J\\
                        \varphi(T)& \text { if }         t=T          
                      \end{cases}.
\eeqs
Using the test function $q=\psi\in\mathcal D(\Omega)$ in the second equation  of \eqref{semidiscrete-prob}, multiplying by $\Dt\varphi(t_{j-1})$ and summing up on $j=1,\ldots,J$, we obtain 
\beq\label{seo}
\sum_{j=1}^J \intd{ \phi\frac{\rho^j -\rho^{j-1}}{\Delta t}, \psi}  \Dt\varphi(t_{j-1})  + \intd{\nabla\cdot\m^j, \psi} \Dt\varphi(t_{j-1})  =\sum_{j=1}^J\intd {f^j, \psi} \Dt\varphi(t_{j-1}) 
\eeq 
Using the piecewise constant function $\pi$ this reads
  \beqs
\int_0^T\intd{ \phi \frac{\partial \Pi\rho_{\Dt}}{\partial t}, \psi} \varphi_{\Dt} dt  + \int_0^T\intd{ \pi \nabla\cdot\m_{\Dt}, \psi} \varphi_{\Dt}  dt =\int_0^T\intd{ \pi f, \psi} \varphi_{\Dt}  dt.
\eeqs 
Since $\psi\varphi_{\Dt}$ converges strongly to $\psi\varphi$ in $L^1(0,T;R(\Omega))$. Hence limit  $\Dt\to 0$
\beq\label{seo2}
\int_0^T\intd{ \phi \frac{\partial\rho}{\partial t}, \psi} \varphi  dt  + \int_0^T\intd{ \nabla\cdot\m, \psi} \varphi dt =\int_0^T\intd{ f, \psi} \varphi dt.  
\eeq 
The set $\{  \psi\varphi, \psi\in \mathcal D(\Omega), \varphi\in \mathcal D ( \overline{(0,T)} ) \}$ is dense subset of $L^1(0,T; R(\Omega))$. Thus the identity \eqref{firsteq}  is established. 
 
 To prove the remaining two identities we rewrite \eqref{seo} as form
  \begin{multline*}
- \sum_{j=1}^J \Dt\intd{\phi\rho^j, \psi}\frac{\varphi(t_j)- \varphi(t_{j-1})}{\Dt} + \sum_{j=1}^J \Dt\intd{ \nabla\cdot\m^j, \psi} \varphi(t_{j-1}) \\
=\sum_{j=1}^J \Dt\intd{ f^j, \psi }\varphi(t_{j-1})
-  \intd{ \phi\rho^J, \psi}\varphi(T)+\intd{\phi\rho^0, \psi}\varphi(0),
\end{multline*}
which is 
 \begin{multline*}
  -\int_0^T\intd {\phi \pi\rho_{\Dt}, \psi} \frac{\partial \Pi\varphi}{\partial t} dt  + \int_0^T\intd{ \pi \nabla\cdot\m_{\Dt}, \psi} \varphi_{\Dt} dt\\
   =\int_0^T\intd{\pi f, \psi} \varphi_{\Dt} dt-  \intd{\phi\rho^J,\psi}\varphi(T)+\intd{\phi\rho^0,\psi}\varphi(0).
 \end{multline*}
   Passing to the limit $\Dt\to 0$ we obtain
   \beq\label{ab0}
   -\int_0^T\intd{\phi \rho, \psi} \frac{\partial \varphi}{\partial t} dt  + \int_0^T\intd{ \nabla\cdot\m, \psi} \varphi dt =\int_0^T\intd{ f, \psi }\varphi dt-  \intd{ \phi\rho_T, \psi}\varphi(T)+\intd{\phi\rho^0, \psi}\varphi(0).
   \eeq
   In the other hand, partial integration of \eqref{seo2} yields
   \beq\label{ab1}
-\int_0^T\intd{\phi \rho,\psi} \frac{\partial \varphi }{\partial t} dt  + \int_0^T\intd{\nabla\cdot\m, \psi} \varphi dt =\int_0^T\intd{ f, \psi} \varphi dt- \intd{\phi \rho(T),\psi} \varphi(T)+ \intd{ \phi \rho(0),\psi} \varphi(0).  
\eeq 
 It follows from \eqref{ab0} and \eqref{ab1} that
 \beqs
 \intd{\phi (\rho(0)-\rho^0), \psi} \varphi(0)=\intd{\phi (\rho(T)-\rho_T), \psi} \varphi(T).
   \eeqs
   Since $\varphi(0)$ and $\varphi(T)$ are arbitrary 
\beqs
\intd{\phi (\rho(0)-\rho^0), \psi}=0=\intd{\phi (\rho(T)-\rho_T), \psi}
   \eeqs
Thus $\rho(0)=\rho^0$ and $\rho(T)=\rho_T$.
  \end{proof}
  
\begin{lemma}\label{Fhat-Eq-F} The limit $\m$ of $\pi\m_{\Dt}$ and $\hat F$ of $ F(x,t, \pi\m_{\Dt})$ satisfy 
$\hat F = F(x,t,|\m|)\m$ in $L^\infty(0,T; (L^{s^*}(\Omega))^d )$. That means  
\beqs
\int_0^T \intd{ \hat F,  \vv } dt = \int_0^T  \intd{ F(x,t,|\m|)\m, \vv }dt, 
\eeqs
for all $\vv\in L^1(0,T; (L^s(\Omega))^d)$. 
\end{lemma}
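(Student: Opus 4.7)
The argument is the classical Minty monotonicity trick, so the strategy is to exploit the monotonicity inequality \eqref{F-mono}, namely
\begin{equation*}
\int_0^T \intd{F(x,t,|\pi\m_{\Dt}|)\pi\m_{\Dt} - F(x,t,|\vv|)\vv,\ \pi\m_{\Dt} - \vv} \, dt \ge 0,
\end{equation*}
for arbitrary test fields $\vv \in L^\infty(0,T;(L^s(\Omega))^d)$. Expanding the product and using the weak-$*$ convergences \eqref{b2}--\eqref{b3}, together with the duality $L^s\times L^{s^*}$, the cross terms $\int_0^T\intd{F(|\pi\m_{\Dt}|)\pi\m_{\Dt},\vv}dt$ and $\int_0^T\intd{F(|\vv|)\vv,\pi\m_{\Dt}}dt$ converge to $\int_0^T\intd{\hat F,\vv}dt$ and $\int_0^T\intd{F(|\vv|)\vv,\m}dt$ respectively. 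The only term that requires real work is $\limsup_{\Dt\to 0}\int_0^T a(\pi\m_{\Dt},\pi\m_{\Dt})\,dt$, and the heart of the proof is to bound it by $\int_0^T\intd{\hat F,\m}\,dt$.

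\textbf{Discrete energy identity.} Testing the first equation of \eqref{semidiscrete-prob} with $\vv=\m^j$ and the second with $q=\rho^j$ and adding, I obtain
\begin{equation*}
a(\m^j,\m^j) + \intd{\phi(\rho^j-\rho^{j-1})/\Dt,\ \rho^j} = (f^j,\rho^j).
\end{equation*}
Summing over $j$, using the polarization $2(\rho^j-\rho^{j-1})\rho^j = (\rho^j)^2-(\rho^{j-1})^2+(\rho^j-\rho^{j-1})^2$ and dropping the nonnegative quadratic, I get
\begin{equation*}
\int_0^T a(\pi\m_{\Dt},\pi\m_{\Dt})\, dt + \tfrac{1}{2}\intd{\phi,(\rho^J)^2} - \tfrac{1}{2}\intd{\phi,(\rho^0)^2} \le \int_0^T (\pi f_{\Dt},\pi\rho_{\Dt})\, dt.
\end{equation*}
Passing to $\limsup$: the right-hand side converges using strong convergence of $\pi f_{\Dt}\to f$ in $L^2(0,T;L^2)$ (by the Lipschitz hypothesis on $f$) and the weak-$*$ convergence of $\pi\rho_{\Dt}$; the term $\intd{\phi,(\rho^J)^2}=\|\sqrt\phi\,\rho^J\|^2$ survives with the correct sign by weak lower semicontinuity applied to $\rho^J\rightharpoonup\rho_T$ in $L^2(\Omega)$.

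\textbf{Identification via the continuous equation.} By Lemma~\ref{identeq1}, $\phi\rho_t+\nabla\cdot\m=f$ in $L^\infty(0,T;L^2)$ with $\rho(0)=\rho^0$ and $\rho(T)=\rho_T$. Since $\rho\in L^\infty(0,T;R(\Omega))$ and $\rho_t\in L^\infty(0,T;R'(\Omega))$, pairing against $\rho$ and integrating in time yields
\begin{equation*}
\tfrac{1}{2}\|\sqrt\phi\,\rho(T)\|^2 - \tfrac{1}{2}\|\sqrt\phi\,\rho(0)\|^2 + \int_0^T \dualprod{\nabla\cdot\m,\rho}\, dt = \int_0^T (f,\rho)\, dt.
\end{equation*}
By Green's formula and $\rho|_{\partial\Omega}=0$, together with Lemma~\ref{identeq0}(iii) which gives $\hat F=-\nabla\rho$, the middle term equals $-\int_0^T(\m,\nabla\rho)\,dt=\int_0^T\intd{\m,\hat F}\,dt$. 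Combining with the previous display,
\begin{equation*}
\limsup_{\Dt\to 0}\int_0^T a(\pi\m_{\Dt},\pi\m_{\Dt})\, dt \le \int_0^T \intd{\hat F,\m}\, dt.
\end{equation*}

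\textbf{Minty's trick.} Feeding this into the monotonicity inequality and passing to the limit gives, for every admissible $\vv$,
\begin{equation*}
\int_0^T \intd{\hat F - F(x,t,|\vv|)\vv,\ \m-\vv}\, dt \ge 0.
\end{equation*}
Choosing $\vv=\m-\lambda\mathbf{w}$ with $\lambda>0$ and $\mathbf{w}\in L^\infty(0,T;(L^s(\Omega))^d)$, dividing by $\lambda$, and sending $\lambda\to 0^+$ (using continuity \eqref{F-cont} and dominated convergence to pass the limit inside the integral) produces $\int_0^T\intd{\hat F-F(x,t,|\m|)\m,\mathbf{w}}\,dt\ge 0$ for all $\mathbf{w}$, whence $\hat F=F(x,t,|\m|)\m$. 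The main obstacle is the energy identification step: it relies crucially on the matching of the boundary-in-time terms $\rho(0)=\rho^0$ and $\rho(T)=\rho_T$ supplied by Lemma~\ref{identeq1}, and on the fact that weak-$*$ convergence $\rho^J\rightharpoonup\rho_T$ gives weak lower semicontinuity of $\|\sqrt\phi\,\rho^J\|^2$ in the \emph{correct} direction to preserve the inequality after taking $\limsup$.
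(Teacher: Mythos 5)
Your proof is correct and follows essentially the same route as the paper: the discrete energy inequality summed over $j$, the identification $\int_0^T(\phi\rho_t,\rho)\,dt=\tfrac12\|\sqrt\phi\,\rho(T)\|^2-\tfrac12\|\sqrt\phi\,\rho(0)\|^2$ (the paper's Proposition~\ref{aux1}) combined with Lemma~\ref{identeq1} and Lemma~\ref{identeq0} to bound $\limsup_{\Dt\to0}\int_0^T a(\pi\m_{\Dt},\pi\m_{\Dt})\,dt$ by $\int_0^T(\hat F,\m)\,dt$, and then Minty's trick with $\vv=\m-\lambda\varphi$. If anything, your version is slightly more careful than the paper's in using $\limsup$ (rather than $\liminf$) and in making explicit the weak lower semicontinuity of $\|\sqrt\phi\,\rho^J\|^2$ under $\rho^J\rightharpoonup\rho_T$.
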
  

To show this, we need an auxiliary result, a particular result of Lemma 1.2 in \cite{raviart69}.  
\begin{proposition}\label{aux1}
The limit $\rho$ of $\pi \rho_{\Dt}$ satisfy 
\beq
\int_0^T  \left(\phi \frac{\partial \rho (t) }{\partial t}, \rho(t)\right) dt =\frac 1{2}   \int_\Omega\phi |\rho(T)|^2   - \phi |\rho(0)|^2  dx. 
\eeq
\end{proposition}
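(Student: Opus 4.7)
The plan is to recognize the identity as a special case of the abstract parabolic chain rule of Lemma~1.2 in \cite{raviart69} (equivalently, the Lions--Temam chain rule for evolution triples), and to verify the hypotheses of that lemma in the present setting.

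First, set up the Gelfand triple. By definition $R(\Omega)=\{r\in L_0^2(\Omega),\,\nabla r\in (L^{s^*}(\Omega))^d\}$ embeds continuously into $L^2(\Omega)$, and identifying $L^2(\Omega)$ with a subspace of $R'(\Omega)$ yields
$$R(\Omega)\hookrightarrow L^2(\Omega)\hookrightarrow R'(\Omega).$$
Hypothesis $H_1$ guarantees that the weighted inner product $(\phi u,v)$ is equivalent to the standard $L^2$ inner product and hence furnishes an admissible pivot Hilbert structure on $L^2(\Omega)$. By Lemma~\ref{boundedness3} combined with the weak-$*$ limits in \eqref{b1} one has $\rho\in L^\infty(0,T;R(\Omega))\subset L^2(0,T;R(\Omega))$ and $\partial\rho/\partial t\in L^\infty(0,T;R'(\Omega))\subset L^2(0,T;R'(\Omega))$, while Lemma~\ref{identeq1} identifies the traces $\rho(0)=\rho^0$ and $\rho(T)=\rho_T$.

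Under these hypotheses, the abstract parabolic chain rule gives, after modification on a set of measure zero, that $\rho\in C([0,T];L^2(\Omega))$ and that the scalar function $t\mapsto \int_\Omega\phi|\rho(t)|^2\,dx$ is absolutely continuous on $[0,T]$ with distributional derivative $2(\phi\,\partial_t\rho,\rho)$, the product being interpreted through the $R'$--$R$ duality. Integrating from $0$ to $T$ then delivers the claimed identity.

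The main obstacle is that a naive limit of the discrete chain rule is delicate: testing the second equation of \eqref{semidiscrete-prob} with $q=\rho^j$ and invoking the algebraic identity $2a(a-b)=a^2-b^2+(a-b)^2$ produces the telescoping relation
$$2\int_0^T\!\left(\phi\,\partial_t\Pi\rho_{\Dt},\,\pi\rho_{\Dt}\right)dt = \int_\Omega\phi(|\rho^J|^2-|\rho^0|^2)\,dx + \sum_{j=1}^J\int_\Omega\phi|\rho^j-\rho^{j-1}|^2\,dx,$$
in which both factors of the left-hand cross-term converge only weakly in incompatible spaces. Raviart's lemma handles this precisely by exploiting the compactness of the triple $R\hookrightarrow L^2\hookrightarrow R'$: an Aubin--Lions argument upgrades $\pi\rho_{\Dt}\to\rho$ to strong convergence in $L^2(0,T;L^2(\Omega))$, and the bound of Lemma~\ref{boundedness2} shows that the dissipation sum on the right is of order $O(\Dt)$ and therefore vanishes in the limit, closing the argument.
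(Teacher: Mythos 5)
Your argument is correct and coincides with the paper's treatment: the paper gives no proof of Proposition~\ref{aux1} either, simply invoking it as ``a particular result of Lemma 1.2 in \cite{raviart69}'' (the proof environment that follows the proposition is actually the proof of Lemma~\ref{Fhat-Eq-F}), and that cited lemma is exactly the abstract chain rule you verify --- Gelfand triple $R(\Omega)\hookrightarrow L^2(\Omega)\hookrightarrow R'(\Omega)$ with the $\phi$-weighted pivot inner product, $\rho\in L^\infty(0,T;R(\Omega))$, $\partial_t\rho\in L^\infty(0,T;R'(\Omega))$, and the endpoint traces identified by Lemma~\ref{identeq1}. One caveat on your closing paragraph: that chain rule is proved by mollification in time and needs only a continuous dense embedding, not compactness --- which is fortunate, since $R(\Omega)\hookrightarrow L^2(\Omega)$ need not be compact for $d\ge 3$ when $s^*=s/(s-1)$ is close to $1$ --- so the Aubin--Lions digression is neither required nor, in general, available, although it does not affect the validity of your main argument.
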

\begin{proof}
Equation \eqref{sum2eqs} rewrite as
\beqs
\intd{F(x,t,|\m^j|)\m^j , \m^j }+\intd{\phi\frac{\rho^j -\rho^{j-1}}{\Delta t}, \rho^j } =\intd{f^j, \rho^j }.
\eeqs
Since $ \intd{\phi\frac{\rho^j -\rho^{j-1}}{\Delta t}, \rho^j} \ge \frac 1 {2\Dt} \intd{ \phi,|\rho^j|^2 -|\rho^{j-1}|^2 } $,
\beqs
\intd{ F(x,t,|\m^j|)\m^j, \m^j}+\frac 1 {2\Dt} \intd{\phi,|\rho^j|^2 -|\rho^{j-1}|^2} \le \intd{f^j, \rho^j }.
\eeqs
Multiplying $\Dt$ and summing up $j=1,\ldots, J$, we obtain  
\beqs
\sum_{i=1}^J  \int_{t_{i-1}}^{t_i} \intd{ F(x,t,|\m^j|)\m^j , \m^j}+\frac 1 {2} \intd{\phi,|\rho^J|^2 -|\rho^{0}|^2} \le \sum_{i=1}^J  \int_{t_{i-1}}^{t_i} \intd{ f^j, \rho^j }dt.
\eeqs
Then we rewrite as
\beqs
\sum_{i=1}^J  \int_{t_{i-1}}^{t_i} \intd{ F(x,t,|\pi\m_{\Dt}|)\pi\m_{\Dt}, \pi\m_{\Dt}  }+\frac 1 {2} \intd{\phi,|\rho^J|^2} -\intd{\phi,|\rho^{0}|^2} \le \sum_{i=1}^J  \int_{t_{i-1}}^{t_i} \intd {\pi f_{\Dt}, \pi\rho_{\Dt} } dt,
\eeqs
which is  
\beqs
 \int_{0}^{T} \intd{ F(x,t,|\pi\m_{\Dt}|)\pi\m_{\Dt},\pi \m_{\Dt} }+\frac 1 {2} \intd{\phi,|\rho^J|^2} -\intd{\phi,|\rho^{0}|^2} \le \int_{0}^{T} \intd{ \pi f_{\Dt}, \pi\rho_{\Dt} } dt.
\eeqs
Take the limit inferior we conclude that
\beqs
 \liminf_{\Dt\to 0}\int_{0}^{T} \int_{0}^{T} \intd{  F(x,t,|\pi\m_{\Dt}|)\pi\m_{\Dt},\pi \m_{\Dt} }+\frac 1 {2} \intd{\phi,|\rho^J|^2} -\intd{\phi,|\rho^{0}|^2} \le \int_{0}^{T} \intd{ f, \rho } dt.
\eeqs
Using the result of Proposition~\ref{aux1}, we find that 
\beqs
 \liminf_{\Dt\to 0}\int_{0}^{T} \intd{ F(x,t,|\pi\m_{\Dt}|)\pi\m_{\Dt},\pi \m_{\Dt} }+  \int_0^T \intd{\phi\frac{\partial \rho}{\partial t}, \rho } dt   \le \int_{0}^{T} \intd{f, \rho} dt.
\eeqs
On the other hand, Lemma~\ref{identeq1} gives  
\beqs
\int_0^T\intd{\phi\frac{\partial \rho}{\partial t}, \rho} dt +\int_0^T \intd{\nabla\cdot \m, \rho} dt = \int_0^T \intd{f, \rho} dt. 
\eeqs
From \eqref{coincide2} and \eqref{coincide3} in Lemma~\ref{identeq0} we have 
\beqs
\int_0^T \intd{\nabla \cdot\m, \rho} dt = -\int_0^T \intd{ \m,\nabla\rho} dt =\int_0^T \intd{\m,\hat F}dt.
\eeqs
 Therefore, 
 \beqs
  \liminf_{\Dt\to 0}\int_{0}^{T} \int_{0}^{T} \intd{ F(x,t,|\pi\m_{\Dt}|)\pi\m_{\Dt},\pi \m_{\Dt} }\le \int_0^T \intd{\m,\hat F}dt.
\eeqs
We have shown that for arbitrary $\vv\in L^\infty(0,T; (L^s(\Omega))^d )$
\beqs
\begin{aligned}
&\int_0^T\intd{\hat F - F(x,t,|\vv|\vv), \m-\vv} dt \\
&\qquad \ge \liminf_{\Dt\to 0} \int_0^T\intd{ F(x,t,|\pi\m_{\Dt}|)\pi\m_{\Dt} - F(x,t,|\vv|)\vv , \pi \m_{\Dt} -\vv } dt \ge 0. 
\end{aligned}
\eeqs
Choose $\vv= \m -\lambda  \varphi$, $\lambda>0$, $\varphi\in L^\infty(0,T; (L^s(\Omega))^d ) $ 
\beqs
\int_0^T\intd{\hat F - F(x,t,|\m -\lambda  \varphi|)(\m -\lambda  \varphi), \lambda\varphi} dt \ge 0. 
\eeqs
Dividing $\lambda$ and letting $\lambda\to 0$, we obtain 
\beqs
\int_0^T\intd{\hat F - F(x,t,|\m|)\m, \varphi} dt \ge 0 \quad \forall \varphi\in L^\infty(0,T; (L^s(\Omega))^d ). 
\eeqs
This implies $\hat F = F(x,t,|\m|)\m$.  (cf. the proof of Thm. 1.1 in \cite{raviart69} page 313).
\end{proof}

\begin{theorem}\label{ExistenceSol}
For all $f \in L^\infty(0, T; L^2(\Omega)) $ that are Lipschitz continuous in time $t$. There exists a pair $(\m,\rho) \in L^\infty(0,T; W({\rm div}, \Omega))\times L^\infty(0,T; L_0^2),$ such that
\begin{align*}
\int_0^T \intd{F(x, |\m|)\m, \vv} dt - \int_0^T \intd{\nabla \cdot \vv, \rho} =0,\,\,& \text{ for all } \vv \in L^1(0,T; L^2(\Omega)),\\
\int_0^T \intd{\phi\frac{\partial \rho}{\partial t}, q}dt +\int_0^T \intd{\nabla\cdot \m, q} dt=\int_0^T\intd{f,q} dt, \,\, & \text{ for all } q \in L^1(0,T; L_0^2(\Omega)).
\end{align*}
\end{theorem}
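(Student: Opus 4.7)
The plan is to pass to the limit $\Dt \to 0$ in the semi-discrete problem \eqref{semidiscrete-prob}, assembling the pieces already in place. By Theorem~\ref{Sol-semidiscreteProb}, for every $J \in \N$ we have unique semi-discrete solutions $(\m^j,\rho^j)_{j=0,\ldots,J}$. Lemmas~\ref{boundedness1} and \ref{boundedness3} provide bounds on $\pi\rho_{\Dt}$, $\pi\m_{\Dt}$, $\pi\nabla\cdot\m_{\Dt}$, and $\partial \Pi\rho_{\Dt}/\partial t$ that are uniform in $\Dt$, so extraction of the weakly-$\ast$ convergent subsequences collected in \eqref{b1}--\eqref{b3} is immediate. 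Let $(\m,\rho)$ denote the limits; from the bounds we have $\m \in L^\infty(0,T; W(\rm{div},\Omega))$ and $\rho \in L^\infty(0,T; L_0^2(\Omega))$, which gives the required regularity in the theorem statement.

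The continuity equation is essentially free: Lemma~\ref{identeq1} asserts $\phi\,\partial_t\rho + \nabla\cdot\m = f$ as an identity in $L^\infty(0,T; L_0^2(\Omega))$. Pairing it with any $q \in L^1(0,T; L_0^2(\Omega))$ and integrating in time yields the second equation of the theorem, together with the initial condition $\rho(\cdot,0)=\rho_0$ and the terminal condition $\rho(\cdot,T)=\rho_T$.

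For the Darcy--Forchheimer equation, I would start from $a(\m^j,\vv) - b(\vv,\rho^j) = 0$, which holds for every $\vv \in V$ by \eqref{semidiscrete-prob}. Fix $\vv \in V$ and $\varphi \in \mathcal D([0,T])$, multiply by $\Dt\,\varphi(t_{j-1})$, and sum over $j=1,\ldots,J$. Rewriting in piecewise-constant form gives
\beqs
\int_0^T \intd{F(x,t,|\pi\m_{\Dt}|)\pi\m_{\Dt},\vv}\varphi_{\Dt}\, dt \;=\; \int_0^T \intd{\pi\rho_{\Dt},\nabla\cdot\vv}\varphi_{\Dt}\, dt.
\eeqs
As $\Dt \to 0$, the strong convergence $\vv\varphi_{\Dt} \to \vv\varphi$ in $L^1(0,T;(L^s(\Omega))^d)$ paired with \eqref{b3} and the decisive identification $\hat F = F(x,t,|\m|)\m$ provided by Lemma~\ref{Fhat-Eq-F} handle the left-hand side; the right-hand side converges using \eqref{b1}. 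This yields
\beqs
\int_0^T \intd{F(x,t,|\m|)\m,\vv}\varphi\, dt \;=\; \int_0^T \intd{\rho,\nabla\cdot\vv}\varphi\, dt
\eeqs
for every $\varphi \in \mathcal D([0,T])$ and every $\vv \in V$. A standard density argument in $L^1(0,T;V)$ then yields the first equation in the stated form.

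The main obstacle is not the limit passage itself but the identification of the weak limit of the nonlinear term $F(x,t,|\pi\m_{\Dt}|)\pi\m_{\Dt}$. Because only weak-$\ast$ compactness of $\pi\m_{\Dt}$ is available, the nonlinearity cannot be passed through by continuity; the Minty-type monotonicity argument of Lemma~\ref{Fhat-Eq-F} is essential, and it in turn relies on the energy-type upper bound $\liminf_{\Dt\to 0}\int_0^T a(\pi\m_{\Dt},\pi\m_{\Dt})\, dt \le \int_0^T \intd{\m,\hat F}\, dt$ assembled from the telescoping discrete energy identity and Proposition~\ref{aux1}. Once that identification is in hand, the proof of the present theorem is a routine synthesis of Lemmas~\ref{identeq0}, \ref{identeq1}, and \ref{Fhat-Eq-F} with the bounds of Lemmas~\ref{boundedness1}--\ref{boundedness3}.
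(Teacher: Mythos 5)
Your proposal is correct and follows essentially the same route as the paper: both proofs are a synthesis of the a priori bounds, the weak-$*$ limits \eqref{b1}--\eqref{b3}, the Minty-type identification $\hat F = F(x,t,|\m|)\m$ of Lemma~\ref{Fhat-Eq-F}, and Lemma~\ref{identeq1} for the continuity equation. The only cosmetic difference is that you re-derive the Darcy--Forchheimer relation by passing to the limit in the discrete weak formulation tested against $\vv\varphi_{\Dt}$, whereas the paper obtains it by chaining Lemma~\ref{Fhat-Eq-F} with the identity $\hat F=-\nabla\rho$ of Lemma~\ref{identeq0}(iii) and one integration by parts; the two are interchangeable since Lemma~\ref{identeq0}(iii) is itself proved by exactly the limit passage you perform.
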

\begin{proof}
Let $\m$ be the limit of $\pi\m_{\Dt}$ and $\rho$ be the limit of $\pi\rho_{\Dt}$. Then Lemma~\ref{Fhat-Eq-F} and Lemma~\ref{identeq0} part iii) imply that
\beqs
\int_0^T \intd{F(x, t, |\m|)\m,\vv } dt = \int_0^T \intd{\hat F,\vv } dt =- \int_0^T \intd{\nabla \rho,\vv } dt= \int_0^T \intd{\vv, \nabla\cdot \rho }dt,
\eeqs
for all $ \vv \in L^1(0,T; L^2(\Omega))$. In Lemma~\ref{identeq1} we have seen
that $(\m, \rho)$ fulfills the second equation.
\end{proof}

\myclearpage
\myclearpage
\appendix

\def\cprime{$'$} \def\cprime{$'$} \def\cprime{$'$}

\end{document}